\title[Newton's method into vector bundles Part II]{Newton's method for nonlinear mappings into vector bundles Part II: Application to Variational Problems}
\author[L. Weigl]{Laura Weigl\orcidlink{0009-0006-4520-5968}}
\address[L. Weigl]{Department of Mathematics, University of Bayreuth, 95440 Bayreuth, Germany}
\email{\detokenize{laura.weigl@uni-bayreuth.de}}
\urladdr{https://num.math.uni-bayreuth.de/en/team/laura-weigl/}
\author[R. Bergmann]{Ronny Bergmann\orcidlink{0000-0001-8342-7218}}
\address[R. Bergmann]{Norwegian University of Science and Technology, Department of Mathematical Sciences, NO-7491 Trondheim, Norway}
\email{\detokenize{ronny.bergmann@ntnu.no}}
\urladdr{https://www.ntnu.edu/employees/ronny.bergmann}
\author[A. Schiela]{Anton Schiela\orcidlink{0000-0002-6959-2951}}
\address[A. Schiela]{Department of Mathematics, University of Bayreuth, 95440 Bayreuth, Germany}
\email{\detokenize{anton.schiela@uni-bayreuth.de}}
\urladdr{https://num.math.uni-bayreuth.de/en/team/anton-schiela/}
\date{\today}
\dedicatory{}
\begin{document}

\begin{abstract}
We consider the solution of variational equations on manifolds by Newton's method. These problems can be expressed as root finding problems for mappings from infinite dimensional manifolds into dual vector bundles. We derive the differential geometric tools needed for the realization of Newton's method, equipped with an affine covariant damping strategy. We apply Newton's method to a couple of variational problems and show numerical results.

\end{abstract}

\keywords{Newton's method, Banach manifolds, vector bundles, variational problems}

\makeatletter
\ltx@ifpackageloaded{hyperref}{%
\subjclass[2020]{\href{https://mathscinet.ams.org/msc/msc2020.html?t=49Q99}{49Q99}}
}{%
\subjclass[2020]{53-08, 46T05, 58E10, 49Q99}
}
\makeatother

\maketitle

\section{Introduction}%
\label{section:introduction}

Newton's method is one of the standard algorithms for solving nonlinear problems numerically. It can be formulated for mappings between very general spaces, classically between Banach spaces, and is thus applicable in particular to problems, involving nonlinear partial differential equations (PDEs). Many PDEs are formulated in weak form as variational problems. Here, we are looking for solutions of the equation $F(x)=0$, where $F \colon X \to E^*$ maps between a Banach space $X$ and a dual space $E^*$. With the help of test functions $e\in E$ this can be written equivalently as
\begin{equation}\label{eq:varLinear}
 \text{find } x\in X \text{ such that} \quad F(x)e = 0  \quad \text{ for all } e\in E.
\end{equation}
Some, but not all, variational problems arise from an energy principle of the form $\min_{x\in X} f(x)$, resulting in a stationarity condition of the form $f'(x)=0$ with $f'\colon X\to X^*$. 

The class of variational problems, however, is not restricted to mappings between linear spaces. Plenty of interesting problems involve nonlinear spaces, in particular, differentiable manifolds. One of the most well known is the problem of finding a geodesic that connects two given points by minimizing its Dirichlet energy among all connecting curves $\gamma\colon [0,T]\to \M$ on a Riemannian manifold $\M$:
 \begin{equation}\label{eq:Dirichlet}
  \min_{\gamma} \int_{[0,T]} \frac{1}{2} \lVert\dot \gamma(t) \rVert^2_{\gamma(t)}\, dt, \quad \gamma(0)=\gamma_0, \gamma(T)=\gamma_T.
\end{equation}
More generally, harmonic maps minimize a similar energy functional of mappings, defined on more general domains (cf.~\cite{helein2008harmonic}). Other important classes of examples are found in continuum mechanics~\cite{MarsdenRaitu:1994}, including cosserat materials \cite{sander2016numerical} and models of inextensible solids~\cite{rubin2000}, in micro-magnetism, or in density functional theory \cite{Altmann:2025}, to name just a few, see also \cite{bartels2015numerical,hardering2020geometric} for an account on the numerical analysis.
In this case, an energy principle of the form $\min_{x\in \X} f(x)$, where $\X$ is a differentiable, possibly infinite dimensional manifold, will lead to the equation $f'(x)=0$, where $f' \colon \X\to T^*\X$ maps into the cotangent bundle $T^*\X$ of $\X$.
Slightly more generally, we can formulate variational problems via mappings $F \colon\X\to \E^*$, where $\E$ is a vector bundle, and $\E^*$ its so-called dual bundle. Then our problem is of the form (which we will elaborate in detail, below):
\begin{equation}\label{eq:varVectorBundle}
\text{find } x\in \X \text{ such that } \quad  F(x)e = 0  \quad \text{ for all } e\in E_{y(x)},
\end{equation}
where $E_{y(x)} \subset \E$ is the appropriate linear space of test functions, which depends on $x$. In contrast to \eqref{eq:varLinear}, determining the correct space of test functions is part of the problem. In our example of a geodesic, $\X$ is a Sobolev manifold of mappings into $\M$, and the variational equation is written as follows:
\begin{equation}
    \label{eq:GeodesicsVariational}
  0 =\int_{[0,T]} \langle \dot \gamma(t),\dot {\delta \gamma}(t) \rangle_{\gamma(t)}\, dt \quad \text{ for all } \delta \gamma \in T_\gamma \X.
\end{equation}
In this work we consider the application of Newton's method to problems of the form~\eqref{eq:varVectorBundle}. We build upon the results of~\cite{WeiglSchiela:2024}, where Newton's method for mappings from a manifold $\X$ into a vector bundle $\E$ has been introduced and analyzed.
One of the main outcomes of this work is that a linear connection on $\E$ is required to render Newton directions well defined.
This generalizes the well known insight that covariant derivatives have to be employed in Newton methods for finding zeros of vector fields on Riemannian manifolds (cf. e.g. \cite{gabay1982minimizing, ArgyrosHilout:2009:1,FernandesFerreiraYuan:2017:1,louzeiro2025inexact}) or one-forms \cite{smith1993geometric,smith1994optimization}. For an account on Newton's method for vector fields and further literature we refer to \cite[Section 6]{AbsilMahonySepulchre:2008:1}.
In the semismooth context, Newton's method for 
vector fields on Riemannian manifolds has already been discussed in \cite{diepeveen2021inexact,DeOliveiraFerreira:2020:1,si2024riemannian}. Newton's method for shape optimization was considered in \cite{schulz2015towards}. 

To obtain convergence from remote initial guesses, Newton's method is typically equipped with a globalization strategy.
Especially in Newton's method applied in optimization on Riemannian manifolds some strategies including damping and trust region methods were introduced, see e.g. \cite{EfficientDampedNewton, absil2007trust}. In \cite[Sec. 6]{WeiglSchiela:2024} we presented an affine-covariant damping scheme. Here, a vector transport on $\E$ is needed in addition, which has to be consistent with the connection in a certain sense. In the current paper, we will discuss, how these concepts can be implemented, if the co-domain of our mapping is a dual vector bundle, so that variational equations on manifolds can be solved.

Our paper is structured as follows: we start by a brief recapitulation of the main differential geometric concepts, required to define Newton steps for mappings into general vector bundles in Section~\ref{sec:NewtonDualVectorBundles}.
We consider the specific case of Newton's method for mappings into dual vector bundles, and discuss the differential geometric tools, including dual connections, necessary to define Newton's method for variational equations.
This includes an affine covariant damping strategy that we present for dual vector bundles in Section~\ref{sec:AffineCovariantDamping}. As an important special case we consider embedded submanifolds in detail.
Finally, we turn to a couple of examples in Section~\ref{sec:applications}, two of which can be seen as a generalization of the geodesic problem, namely elastic geodesics in a force field and with obstacle avoidance. Finally, as an example from continuum mechanics we consider the simulation of an elastic inextensible rod.
%
%
\section{Preliminaries}%
\label{sec:Preliminaries}
In this section we recall some notations, definitions and basic properties of differential geometry and from~\cite{WeiglSchiela:2024} used throughout the work.
Let $\X$ be a \emph{Banach manifold}, i.\,e.\ a topological space $\X$ and a collection of charts $(U,\phi)$, where each chart $\phi\colon U \to \mathbb X$ maps an open subset $U$ of $\X$ homeomorphically into a real Banach space $\mathbb{X}$ with norm $\Vert\cdot\Vert_{\mathbb{X}}$.
Unless otherwise noted we will assume that the manifold is of class $C^1$, cf.~\cite[II, §1]{Lang:1999:1}.
We call mappings $F\colon \X \to \M$ between two Banach manifolds \emph{continuous, differentiable} or \emph{locally Lipschitz}, if their representations in charts, i.\,e.\ $\phi_{\M} \circ F \circ \phi_{\X}^{-1}$, have the respective property.
It can be shown that these properties are independent of the choice of charts.
The \emph{tangent spaces} of a manifold $\X$, denoted by $T_x\X$ for $x\in\X$, can be defined by using charts, see, e.g.,~\cite[II §2]{Lang:1999:1}. We will equip each $T_x \X$ with a norm that is equivalent to $\lVert\cdot\rVert_{\mathbb X}$ and thus renders $T_x \X$ a Banach space. In~\cite[Sec. 5.1]{WeiglSchiela:2024} we showed that, if the mapping $x\mapsto \lVert\cdot\rVert_x$ is sufficiently regular, a construction similar to the one, performed on Riemannian manifolds, leads to a metric distance on $\X$ that can be used to study convergence of Newton's method.

A map $F\colon\X\rightarrow\M$ between two $C^1$-manifolds $\X$ and $\M$ is called \emph{continuously differentiable}, if its representation in charts has the same property. Then this mapping induces a linear map $F^\prime(x)\colon T_{x}\X \rightarrow T_{F(x)}\M$, termed \textit{tangent map} or \textit{derivative} of $F$ at any $x\in\X$~\cite{Lang:1999:1}.
A map $W \colon X\rightarrow Y$ between normed linear spaces is called Newton-differentiable (or semi-smooth) (cf. \cite{Ulbrich:2011:1,Mifflin:1977:1,QiSun:1993:1}) at $x_*\in X$ with respect to a Newton-derivative $W' \colon X \to L(X,Y)$, if it satisfies the following property:
\begin{equation*}
 \lim_{x\to x_*} \frac{\lVert W'(x)(x-x_*)-(W(x)-W(x_*))\rVert_Y}{\lVert x-x_*\rVert_X} = 0.
\end{equation*}
A map $F\colon\X\rightarrow\M$ between two $C^1$-manifolds is called \emph{Newton-differentiable} (cf.~\cite[Def. 4.4]{WeiglSchiela:2024}), if its representation in charts is.
\subsection*{Vector bundles.}
Consider a \emph{vector bundle} $(\E, \Y, p)$, which will be assumed to be of class $C^1$, unless otherwise stated.
Its \textit{bundle projection} $p\colon\E \rightarrow \Y$ is a surjective $C^1$-map, which assigns to $e\in \E$ from the \emph{total space} $\E$ its base point $y=p(e)$ in the \textit{base manifold} $\Y$~\cite[III §1]{Lang:1999:1}.
Since the bundle projection contains all necessary information, we will use the mapping $p\colon\E\to\Y$ or simply $\mathcal E$, if the context is clear, to denote the vector bundle in the following. Vector bundles can be seen as manifolds with special structure: for each point $y\in \Y$ on the base manifold the so-called \textit{fibre} $E_y \coloneqq p^{-1}(y)$ over $y$ is a  topological vector space whose topology is induced by some norm, rendering $E_y$ complete. The zero element in $E_y$ is denoted by $0_y$.
A mapping $v\colon \Y \to \E$, which is a right inverse of the vector bundle projection, i.\,e.\ $p(v(y))=y$ for all $y\in \Y$, is called \textit{section} of $\E$. The set of all sections is denoted by $\Gamma(\E)$. The most prominent example for a vector bundle is the tangent bundle $\pi\colon T\X\to \X$ of a manifold $\X$ with fibres $T_x\X$. A section of $T\X$ is called a \textit{vector field}.

Vector bundles can be described via \emph{local trivializations}. For a Banach space $(\mathbb E,\lVert\cdot\rVert_{\mathbb E})$ and an open set $U \subset \Y$, these are diffeomorphisms $\tau\colon p^{-1}(U) \to U \times \mathbb E$, such that $\tau_y\coloneqq\tau|_{E_y} \in L(E_y,\mathbb E)$ are linear isomorphisms.
Thus, any element of $e\in \E$ can be represented as a pair $(y,e_\tau)\in \Y\times \mathbb E$ where $y=p(e)$ is the base point and the fibre part $e_\tau$ depends on the trivialization. For two trivializations $\tau $ and $\tilde \tau$ around $y\in\Y$ we can define smooth transition mappings $A\colon y \mapsto \tilde \tau_y\tau_y^{-1}$ with $A(y) \in L(\mathbb E,\mathbb E)$, and obtain $e_{\tilde \tau} = A(y)e_\tau$.

Consider the tangent space $T_e\E$ of the vector bundle $\E$ at $e$.
Similar to vector bundle elements, every element $\delta e \in T_e\E$ can be represented in local trivializations as a pair of elements $(\delta y,\delta e_\tau) \in T_y\Y \times \mathbb E$, where the tangential part $\delta y = p'(e)\delta e$ is given canonically.
The representation $\delta e_\tau$ of the fibre part, however, depends crucially on the chosen trivialization: if $e_{\tilde\tau} = A(y)e_\tau$, then $\delta e_{\tilde\tau} = A(y)\delta e_\tau+(A'(y)\delta y) e_\tau$ by the product rule.
In particular, there is no natural splitting of $\delta e$ into a tangential part and a fibre part.
For a vector bundle $p\colon\E\to\Y$ we can consider the corresponding \emph{dual vector bundle} $p^*\colon \E^* \to \Y$ which occurs naturally in many applications.
Its fibres are given by $E^*_y = (p^*)^{-1}(y) = L(E_y,\R)$, i.\,e.\ the dual space of $E_y$. A prominent example is the cotangent bundle $T^*\X$ of a manifold $\X$ which is the dual bundle of the tangent bundle $T\X$.

Given a mapping $s\colon\Y_1\to \Y_2$ between two manifolds and a vector bundle $p_2\colon\E_2\to \Y_2$, we denote the \emph{pullback bundle} $s^*\E_2 \coloneqq \{(x,e)\in \Y_1 \times \E_2\colon s(x) = p_2(e)\}$ of $\E_2$ via $s$ by $s^*p\colon s^*\E_2 \to \Y_1$. Its fibres are given by $(s^*E_2)_x \coloneqq \{x\} \times E_{2, s(x)}$.
\subsection*{Fibrewise linear mappings.}
Consider two vector bundles $p_1\colon \E_1 \to \Y_1$, $p_2\colon \E_2 \to \Y_2$. Let $y\in \Y_1$ and $z\in\Y_2$ and denote
\begin{equation*}
    L(E_{1,y},E_{2,z}) \coloneqq \{ l\colon E_{1,y}\to E_{2,z} \; \vert \; l \text{ continuous and linear}\}.
\end{equation*}
Given a mapping $s\colon\Y_1\to \Y_2$ we can collect all sets of linear mappings $L(E_{1,y},E_{2,s(y)})$ for all $y\in \Y_1$ in a set $\L(\E_1,s^*\E_2)$.
We observe that $p_\L\colon \L(\E_1,s^*\E_2) \to \Y_1$ is a vector bundle itself with fibres $L(E_{1,y},E_{2,s(y)})$.
In trivializations an element $H$ of $\L(\E_1,s^*\E_2)$ can be represented by a pair $(y, H_\tau)$ where $y\in\Y_1$ and $H_\tau \in L(\mathbb E_1,\mathbb E_2)$. If the co-domain is a fixed fibre, i.e, $E_z \cong \{z\}\times E_z$, we use the simplified notation $\L(\E_1,E_z)$ instead of the cumbersome $\L(\E_1,s_z^*(\{z\}\times E_z))$, where $s_z\colon\Y_1\to \{z\}$ is the constant map.
A section $S \in \Gamma(\L(\E_1,s^*\E_2))$ of the vector bundle $\L(\E_1,s^*\E_2)$, i.e, a mapping
\begin{equation*}
    S\colon \Y_1 \to \L(\E_1,s^*\E_2) \text{ with } p_\L \circ S=Id_{\Y_1}.
\end{equation*}
is called \emph{fibrewise linear mapping}, also known as \emph{vector bundle morphism}, see, e.g.,~\cite[III §1]{Lang:1999:1}.
$S$ is called \emph{locally bounded}, respectively \emph{differentiable}, if its representation in local trivializations
\begin{equation}\label{eq:FibrewiseTriv}
S_\tau\colon U \to L(\mathbb E_1,\mathbb E_2), \; S_\tau(y) = \tau_{\L(\E_1, s^*\E_2)}(S)\coloneqq \tau_{\E_2,s(y)}\circ S(y) \circ \tau_{\E_1,y}^{-1} \in L(\mathbb E_1,\mathbb E_2)
\end{equation}
has the respective property.

If $S$ is differentiable, then its derivative is a section $S'\in \Gamma(T\L(\E_1,s^*\E_2))$ of the tangent bundle $p_\L'\colon T\L(\E_1,s^*\E_2)\to T\Y_1$, i.e., a mapping
\begin{equation*}
S'\colon T\Y_1 \to T\L(\E_1,s^*\E_2), \text{ such that } p_\L' \circ S'=Id_{T\Y_1}.
\end{equation*}
From $S \in \Gamma(\L(\E_1,s^*\E_2))$ we can construct a mapping $\NE{S} \in C^1(\E_1,\E_2)$ as follows:
\begin{align*}
    \NE{S}\colon \E_1 &\to \E_2, \quad
                    e \mapsto \NE{S}(e)\coloneqq S(p_1(e))e.
\end{align*}
By this, we obtain a \textit{natural inclusion} of the sections of the bundle of fibrewise linear mappings into the $C^1$-mappings:
\begin{equation}
    \label{eq:defNaturalInclusion}
\varphi\colon \Gamma(\L(\E_1, s^*\E_2)) \to C^1(\E_1, \E_2), \quad S \mapsto \NE{S}.
\end{equation}
Comparison of the derivatives $S'\in \Gamma(T\L(\E_1,s^*\E_2))$ and $\NE{S}' \in \Gamma(\L(T\E_1,S^*T\E_2))$ in trivializations yields by the product rule
\begin{equation}
    \label{trivializationDerivative}
    \NE{S}_\tau'(y,e_\tau)(\delta y, \delta e_\tau) = S_\tau(y)\delta e_\tau+(S_\tau'(y)\delta y) e_\tau  \quad \text{ for all } (\delta y,\delta e_\tau) \in T_{y}\Y_1\times \mathbb E_1.
\end{equation}
\begin{example}
Given a twice differentiable mapping $F\colon \X \to \M$, the derivative $S\coloneqq F'\in \Gamma(\L(T\X,F^*T\M))$ is a fibrewise linear mapping with $\langle F'\rangle \in C^1(T\X,T\M)$.
\end{example}
\subsection*{Vector transports.} Special fibrewise linear mappings are the so-called \emph{vector transports}.
\begin{definition}
    For a vector bundle $p\colon\E\rightarrow\Y$ and $y \in \Y$ we define a \emph{vector transport} as a section $\VTforward{y} \in \Gamma(\L(\Y \times E_{y}, \E))$, i.e.
    \begin{equation*}
        \VTforward{y}\colon \Y \to \L(\Y \times E_{y}, \E), \ \text{with } p_\L(\VTforward{y}(\hat{y})) = \hat{y} \;\text{ for all } \hat{y}\in\Y
    \end{equation*}
    with the properties that $\VTforward{y}(y)=Id_{E_{y}}$ and $\VTforward{y}(\hat y)$ is invertible for all $\hat y\in \Y$. \\
    We define a \emph{vector back-transport} as a section $\VTback{y}\in \Gamma(\L(\E, E_{y}))$, i.e.
    \begin{equation*}
        \VTback{y}\colon \Y \to \L(\E, E_y), \ \text{with } p_\L(\VTback{y}(\hat{y})) = \hat{y} \; \text{ for all } \hat{y}\in\Y
    \end{equation*}
    with the property that $\VTback{y}(y)=Id_{E_{y}}$ and $\VTback{y}(\hat y)$ is invertible for all $\hat y\in \Y$.
    \end{definition}
In the literature vector transports are often defined as mappings $V(\cdot,\cdot)\colon \Y\times \Y \to \L(\E,\E)$ with $V(y,\hat y)\in L(E_y,E_{\hat y})$. These definitions admit vector transports as well as vector back-transports by $\VTforward{y}(\hat y)=V(y,\hat y)=\VTback{\hat{y}}(y)$. Nevertheless, the derivatives of these two objects show subtle, but important differences.
\newline
\subsection*{Connections on vector bundles.}
The concept of a \emph{connection} on a vector bundle $p\colon \E\to \Y$ is fundamental to differential geometry. It adds a geometric structure imposed on $\E$ that describes in an infinitesimal way, how neighboring fibres are related. Considering the tangent bundle $T\X$ of a Riemannian manifold $\X$ the most prominent example is the \textit{Levi-Civita connection} (see, e.g.,~\cite[VIII §4]{Lang:1999:1}).
Connections give rise to further concepts like the covariant derivative, curvature, geodesics, and parallel transports.
They are described in the literature in various equivalent ways (cf., e.g.,~\cite[IV §3 or X §4 or XIII]{Lang:1999:1}).
Here we choose a formulation that emphasizes the idea of a connection map $Q_e\colon T_e\E \to E_{p(e)}$, which then induces a splitting of $T_e\E$ into a vertical and a horizontal subspace.
Using the bundle projection $p \colon \E\to \Y$, it is well known that the kernel of the derivative $p^\prime \in \Gamma(\L(T\E, p^*T\Y))$ canonically defines the \emph{vertical subbundle} $V\E$ of $\E$~\cite[IV §3]{Lang:1999:1}.
Its fibres are closed linear subspaces, called the \textit{vertical subspaces} $\mathrm{Vert}_e \coloneqq \ker p^\prime(e) \subset T_e\E$.
We can identify $E_{p(e)} \cong  \mathrm{Vert}_e$ canonically by the isomorphism
$|_{E_{p(e)}}\colon E_{p(e)} \rightarrow \mathrm{Vert}_e$, given by $w \mapsto \frac{d}{dt}(e+tw)\vert_{t=0}$. Elements of $\mathrm{Vert}_e$ are represented in trivializations by pairs of the form $(0_{y},\delta e_\tau)\in T_y\Y \times \mathbb E$.
We define a \emph{connection map} as a section
$Q\in \Gamma(\L(T\E,p^*\E))$, i.e., for each $e\in\E$ we get a continuous linear map $Q_e \in L(T_e\E,E_{p(e)})$,
with the additional property that $Q_e |_{E_{p(e)}} = Id_{E_{p(e)}}$.
The kernels $\mathrm{Hor}_e \coloneqq \ker Q_e$ are called \emph{horizontal subspaces}, collected in the \emph{horizontal subbundle} $H\E$.
While $V\E$ is given canonically, a choice of $Q$ (or equivalently $H\E$), imposes additional geometric structure on $\E$. Using the identification $\mathrm{Vert}_e \cong E_{p(e)}$, we can view $Q$ as a fibrewise ``projection'' onto $\E$, with $\langle Q \rangle\colon T\E \rightarrow \E$.
In trivializations $(y, e_\tau) \in \Y\times \mathbb E$ for $e\in\E$ and $(\delta y, \delta e_\tau) \in T_y\Y \times \mathbb E$ for $\delta e\in T_e\E$, the representation for a connection map $Q$ can be written in the form
\begin{equation}\label{eq:Qtriv}
    Q_e\delta e \sim Q_{e,\tau}(\delta y,\delta e_\tau)=\delta e_\tau - B_{y,\tau}(e_\tau)\delta y,
\end{equation}
where $B_{y,\tau}\colon \mathbb E \to L(T_y\Y,\mathbb E)$ assigns a linear mapping $B_{y,\tau}(e_\tau)$ to each $e_\tau \in \mathbb E$.
Since $Q_{e,\tau}(0_y,\delta e_\tau)=\delta e_\tau$, we see that $Q_{e,\tau}$ indeed represents a projection onto $\mathrm{Vert}_e$.
To reflect the linearity of the fibres we want the connection to be \emph{linear} (fibrewise with respect to $e$). To this end, we consider the fibrewise scaling ${m_s\colon \E\rightarrow\E, e \mapsto se}$ and require the condition $Q \circ m_s^\prime = m_s \circ Q$ for all $s\in\mathbb{R}$,
which reads in trivializations:
\begin{equation*}
   Q_{se,\tau}(\delta y,s\delta e_\tau)=sQ_{e,\tau}(\delta y,\delta e_\tau) \quad \text{ for all } s\in \mathbb R.
\end{equation*}
It can be shown that $Q$ is linear, if and only if the mapping $e_\tau \mapsto B_{y,\tau}(e_\tau)$ is linear, or put differently, the mapping $(\delta y,e_\tau) \mapsto B_{y,\tau}(e_\tau)\delta y$ is bilinear.
If $\E=T\X$ in classical Riemannian geometry, where $Q$ is given by the Levi-Civita connection, the bilinear mapping is represented by Christoffel symbols, and it is symmetric.
As we have seen in ~\cite[Lemma 2.2]{WeiglSchiela:2024} connection maps $Q_e$ at $e\in\E$ can be derived from differentiable vector back-transports $\VTback{p(e)}$ by setting $Q_e = \NE{\VTback{p(e)}}'(e)$.
\begin{definition}
\label{def:consistentConnectionPrimal}
 Let $e\in \E$ and $y=p(e)$. We call a vector back-transport $\VTback{y} \in \Gamma(\L(\E, E_y))$  \emph{consistent} with a connection map $Q \in \Gamma(\L(T\E, p^*\E))$ at $e$, if $Q_e = \NE{\VTback{y}}^\prime(e)$.
\end{definition}
\subsection*{Newton directions for mappings into vector bundles.} Consider a Newton-differentiable mapping $F\colon\X\to\E$ between a Banach manifold $\X$ and a vector bundle $p\colon\E\to\Y$ with Newton-derivative $F'\colon T\X \to T\E$. By the composition
\begin{equation}
    \label{eq:YBasePoint}
    y \coloneqq p\circ F\colon \X \to \Y, \quad y(x) \coloneqq p(F(x))\in \Y,
\end{equation}
we can compute the base point $y(x) \in \Y$ of $F(x)\in \E$ for given $x\in \X$.
In~\cite{WeiglSchiela:2024}, we presented a Newton method to solve the following root finding problem:
\begin{equation*}
    F(x) = 0_{{y(x)}}.
\end{equation*}
In contrast to classical root finding problems, the linear space $E_{y(x)}$ in which $F(x)$ is evaluated now depends on the iterate $x\in\X$. Hence, the linear space $E_{y(x)}$ changes during the iteration. We have seen that connection maps $Q\in \Gamma(\L(T\E, p^*\E))$ allow us to derive suitable Newton directions by solving the following linear operator equation:
\begin{equation}
    \label{eq:NewtonEquationPrimal}
    Q_{F(x)} \circ F^\prime(x)\delta x + F(x) = 0_{{y(x)}}.
    \end{equation}
    If $Q_{F(x)} \circ F^\prime(x)$ is invertible, then the Newton direction $\delta x \in T_x\X$ is given as the unique solution of this equation, and $\delta x=0$ holds if and only if $x$ is a zero of $F$, i.\,e.\ $F(x)=0_{y(x)}$.
%
%
\section{Newton's Method for variational equations on manifolds}
\label{sec:NewtonDualVectorBundles}
In mathematical physics we frequently encounter variational problems, which often yield differential equations in weak form. If these problems are posed on manifolds, \emph{dual vector bundles} can be used for their treatment.
In this section we discuss Newton's method for mappings into general dual vector bundles.
To do this, we will adapt the method presented in~\cite{WeiglSchiela:2024} to this case.
\subsection{Dual vector bundles}
Let $p\colon\E \to \Y$ be a vector bundle. Consider its dual bundle $p^*\colon\E^* \to \Y$ where each fibre $E^*_y = (p^*)^{-1}(y) = L(E_y,\R)$ is the dual space of $E_y$, so $\E^*=\L(\E,\R)$.
If $\tau_\E\colon \E \to \mathbb E$ is a trivialization of $\E$ with $e_\tau=\tau_{\E,y}(e)$ for $e\in E_y$, then by~\eqref{eq:FibrewiseTriv} dispensing with a trivialization of the co-domain $\R$ of $\ell\in E_y^*=L(E_y,\R)$,  we obtain a local trivialization $\tau_{\E^*}\colon \E^*\to \mathbb E^*$ of the dual bundle $\E^*$ by the mapping $l_\tau=\tau_{\E^*}(l) \coloneqq l \circ \tau_{\E,y}^{-1}$, so that for $e\in E_y$ with representation $e_\tau$:
\begin{equation}
    \label{eq:leTriv}
    l_\tau(e_\tau)=l(e).
\end{equation}
We denote the fibrewise dual pairing for $\ell\in \Gamma(\E^*)$ and $e\in\Gamma(\E)$ by $\ell(e)\colon \Y \rightarrow \R, \; y \mapsto \ell(y)e(y) \in \R$. Its derivative at a point $y\in \Y$ is given by $\ell(e)'(y) \in  L(T_y\Y,\R)$.
\subsection{Variational problems on manifolds}
Consider a  mapping $F\colon\X \rightarrow \E^*$ between a Banach manifold $\X$ and a dual vector bundle $p^*\colon \E^* \to \Y$. According to~\eqref{eq:YBasePoint} we denote the base point of $F(x)$ by $y(x) = p^*(F(x))$ for every $x\in\X$. Then we would like to find the solution of the following root finding problem:
\begin{equation*}
    F(x) = 0_{y(x)}^*.
\end{equation*}
Since this is a problem in a dual space it can equivalently be stated as a \emph{variational problem}, where $e\in E_{y(x)}$ plays the role of a test function:
\begin{equation*}
    F(x)(e) = 0 \quad \text{ for all } e\in E_{y(x)}.
\end{equation*}
\begin{example}
    Consider a twice differentiable function $f\colon\X \rightarrow \R$, defined on a $C^2$-Banach manifold $\X$. In order to find a critical point of $f$, we consider the \emph{covector field} $f^\prime\colon \X \rightarrow T^*\X$ which is a mapping into the cotangent bundle $\pi^*\colon T^*\X \to\X$, i.\,e.\ a dual vector bundle. Then, we want to compute a point $x\in\X$ such that
\begin{equation*}
    f'(x) = 0_x^* \in T_x\X^*.
\end{equation*}
\end{example}
In contrast to classical variational problems, the space of test functions $E_{y(x)}$ now depends on the state $x\in\X$.

To derive Newton's method for this problem, we need to define a Newton equation similar to~\eqref{eq:NewtonEquationPrimal} to derive a Newton direction $\delta x\in T_x\X$ as well as an update formula for the \emph{Newton step} $x_+\in\X$. In particular, the definition of the Newton equation requires a \emph{dual connection map}.
\subsection{Dual connection maps}\label{sec:DualConnectionMaps}
Consider a connection map $Q$ on the vector bundle $p\colon\E\to\Y$. Then a corresponding dual connection map $Q^*$ on the dual vector bundle $p^*\colon\E^* \to \Y$ can be defined as follows~\cite[Chap. 4]{Lee:2018:1}:
\begin{definition}\label{def:dualconnection}
    Let $Q$ be a connection map on $p\colon\E \rightarrow \Y$.
    The \emph{dual connection map} $Q^* \in \Gamma(\L(T\E^*,(p^*)^*\E^*))$ on  $p^*\colon\E^* \rightarrow \Y$ corresponding to $Q$ is defined by
    \begin{equation}\label{eq:defdualconnection}
        (Q_{\ell(y)}^*\circ \ell^\prime(y)\delta y)(e(y)) \coloneqq \ell(e)'(y) \delta y - \ell(y)(Q_{e(y)}\circ e^\prime(y) \delta y)
    \end{equation}
    for all $e \in \Gamma(\E)$, $\ell \in \Gamma(\E^*)$ and for all $\delta y \in T_y\Y$ where $y = p(e) = p^*(\ell)$.
\end{definition}
To show that this is a valid definition, we have to find a representation of $Q^*$ in trivializations. Suppose that the connection map $Q$ is represented as in~\eqref{eq:Qtriv} by
\begin{equation*}
    Q_e\delta e \sim Q_{e,\tau}(\delta y, \delta e_\tau) = \delta e_\tau - B_{y,\tau}(e_\tau)\delta y \in \mathbb E.
\end{equation*}
Recall that in trivializations $l \in \E^*$ is represented by $(y, l_\tau) \in \Y \times \mathbb E^*$ and $\delta l \in T_l\E^*$ by $(\delta y, \delta l_\tau) \in T_y\Y \times \mathbb E^*$.
\begin{proposition}
    For $l \in \E^*$ and $y=p^*(l)$ the linear mapping $Q_l^* \in L(T_l\E^*, E_{y}^*)$ is given by
    \begin{equation}\label{eq:Qstartriveq}
 (Q^*_l\delta l)(v)=Q^*_{l,\tau}(\delta y,\delta l_\tau)(v_\tau)=\delta l_\tau(v_\tau) + l_\tau\circ B_{y,\tau}(v_\tau)\delta y
\end{equation}
for all $v\in E_y$, $\delta l \in T_l\E^*$ and $\delta y \in T_y\Y$.
In particular,~\eqref{eq:defdualconnection} yields a valid definition of a connection on $\E^*$.
\end{proposition}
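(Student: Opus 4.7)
The plan is to verify the coordinate formula by expanding both sides of the defining identity \eqref{eq:defdualconnection} in a local trivialization $\tau_\E$ of $\E$, and then to read off from that expansion that the definition depends only on the pointwise data $\ell(y)$, $\ell'(y)\delta y$, and $e(y)$, so that $Q^*$ descends to a well-defined bundle morphism with the asserted trivialization representation \eqref{eq:Qstartriveq}.

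First I would fix a trivialization, so that $\ell\in\Gamma(\E^*)$ is represented by $y'\mapsto l_\tau(y')\in\mathbb E^*$ and $e\in\Gamma(\E)$ by $y'\mapsto e_\tau(y')\in\mathbb E$, with fibre parts $\delta l_\tau=(l_\tau)'(y)\delta y$ of $\ell'(y)\delta y$, $v_\tau=e_\tau(y)$ of $e(y)$, and $(e_\tau)'(y)\delta y$ of $e'(y)\delta y$. Applying \eqref{eq:Qtriv} to $Q_{e(y)}\circ e'(y)\delta y\in E_{y}$ and then pairing with $\ell(y)$ via \eqref{eq:leTriv} yields
\[
\ell(y)\bigl(Q_{e(y)}\circ e'(y)\delta y\bigr) \;=\; l_\tau\bigl((e_\tau)'(y)\delta y\bigr) - l_\tau\bigl(B_{y,\tau}(v_\tau)\delta y\bigr).
\]
For the remaining term on the right-hand side of \eqref{eq:defdualconnection}, I differentiate the scalar-valued function $y'\mapsto l_\tau(y')(e_\tau(y'))$ by the product rule and obtain $\ell(e)'(y)\delta y=\delta l_\tau(v_\tau)+l_\tau\bigl((e_\tau)'(y)\delta y\bigr)$. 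Subtraction collapses the two $(e_\tau)'(y)\delta y$ contributions and leaves exactly $\delta l_\tau(v_\tau)+l_\tau\circ B_{y,\tau}(v_\tau)\delta y$, which is the claimed formula \eqref{eq:Qstartriveq}.

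To finish I would note that this coordinate expression depends only on $(\delta y,\delta l_\tau)$ and on $v_\tau$, not on the specific sections chosen. Since every $\delta l\in T_l\E^*$ is realized as $\ell'(y)\delta y$ for some local section $\ell$ with $\ell(y)=l$, and every $v\in E_y$ as $e(y)$ for some local section $e$, the identity then indeed defines $Q^*_l\delta l$ as an element of $E_y^*$. Linearity in $v_\tau$ follows from the linearity of $B_{y,\tau}(\cdot)$ guaranteed by $Q$ being a linear connection, and setting $\delta y=0$ reduces the formula to $\delta l_\tau(v_\tau)=\delta l(v)$, so $Q^*_l$ restricts to the identity on the vertical subspace $E_y^*\subset T_l\E^*$; together these show $Q^*\in\Gamma(\L(T\E^*,(p^*)^*\E^*))$ is a connection map in the sense of Section~\ref{sec:Preliminaries}.

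The main technical point is the product-rule step combined with consistent use of the trivialization \eqref{eq:leTriv} of $\E^*$: because $\R$ is not itself trivialized, no Jacobian of a fibre map on the target appears, and any chart-dependence of $\E$ is already absorbed into the Christoffel-type object $B_{y,\tau}$. Independence of the final formula from the chosen trivialization of $\E$ is then automatic, as \eqref{eq:defdualconnection} is formulated intrinsically.
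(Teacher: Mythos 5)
Your proof is correct and follows essentially the same route as the paper's: expand the right-hand side of \eqref{eq:defdualconnection} in a local trivialization using \eqref{eq:Qtriv}, \eqref{eq:leTriv}, and the product rule, cancel the $l_\tau\bigl((e_\tau)'(y)\delta y\bigr)$ terms, and observe that the resulting expression depends only on $l$, $\delta l$, and $v=e(y)$, so it descends to a well-defined element of $E_y^*$. You go slightly further than the paper by explicitly checking the two connection-map axioms (identity on the vertical subspace via $\delta y=0$, and linearity via linearity of $B_{y,\tau}$), which the paper leaves implicit in its closing claim; that is a welcome addition but not a different argument.
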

\begin{proof}
Let $e\in \Gamma(\E)$ and $\ell \in \Gamma(\E^*)$. We will show that~\eqref{eq:defdualconnection} coincides with~\eqref{eq:Qstartriveq} if we set $l = \ell(y)$, $v = e(y)$ and $\delta l = \ell'(y)\delta y$.
Indeed, using the above notations and the representation of $e'(y)\delta y$ by $(\delta y,\delta e_\tau)$ we obtain by the product rule and cancellation of the term $\ell_\tau(\delta e_\tau)(y)\delta y$:
\begin{align*}
 ((Q_{\ell,\tau}^*\circ \ell_\tau^\prime)(y)\delta y)(e_\tau(y))&=\ell_\tau(e_\tau)'(y)  - \ell_\tau(y)(\delta e_\tau-B_{y,\tau}(e_\tau(y))\delta y)\\
 &=\delta \ell_\tau(e_\tau(y))+\ell_\tau(y)(B_{y,\tau}(e_\tau(y))\delta y).
\end{align*}
Thus, $(Q_{\ell(y)}^*\circ \ell^\prime(y)\delta y)(e)$ does not depend on $e_\tau'(y)$, but on $e_\tau(y)$ only, so indeed $Q_{\ell(y)}^*\circ \ell^\prime(y)\delta y \in E_y^*$.
\end{proof}
\begin{remark}
Classically, connections are closely related to covariant derivatives:
    \begin{equation*}
     \nabla_{\delta y}e(y) \coloneqq Q_{e(y)}\circ e'(y)\delta y \mbox{ for } y \in \Y, e \in \Gamma(\E), \delta y\in T_y\Y,
    \end{equation*}
  in which case Definition~\ref{def:dualconnection} yields a product rule:
  \begin{equation*}
    \ell(e)'(y)\delta y =\nabla_{\delta y}^* \ell(y)(e) + \ell(\nabla_{\delta y} e(y)).
   \end{equation*}
\end{remark}
\subsection{Newton equation}\label{subsec:NewtonEquation} With the help of a dual connection map we can now define a Newton equation similar to~\eqref{eq:NewtonEquationPrimal}. Let us consider the mapping $F\colon\X\to\E^*$ again. The tangent map of $F$ is a mapping $F^\prime \in \Gamma(\L(T\X,F^*T\E^*))$. Since $F^\prime(x)\delta x \in T_{F(x)}\E^*$ and $F(x)\in E^*_{y(x)}$, these two quantities cannot be added, and thus the classical Newton equation $F^\prime(x)\delta x + F(x) = 0$ is not well defined.
To resolve this issue, we now use a dual connection map $Q^* \in \Gamma(\L(T\E^*,p^*\E^*))$. With its help we can define a linear mapping at $x\in \X$, which maps into the suitable space $E^*_{y(x)}$, as follows:
\begin{equation*}
    Q^*_{F(x)} \circ F'(x)\colon T_x \X \to E^*_{y(x)}.
\end{equation*}
Then the \emph{Newton equation} is well defined as the following linear operator equation
\begin{equation}
    \label{eq:NewtonEquationDual}
Q_{F(x)}^* \circ F^\prime(x)\delta x + F(x) = 0^*_{{y(x)}}
\end{equation}
which is equivalent to
\begin{equation*}
    (Q_{F(x)}^*\circ F'(x)\delta x+F(x))(e)=0 \quad \text{ for all } e\in E_{y(x)}.
\end{equation*}
If $Q^*_{F(x)} \circ F^\prime(x)$ is invertible, then the Newton direction $\delta x \in T_x\X$ is given as the unique solution of this equation.
Setting $\ell=F(x)$, we obtain by application of~\eqref{eq:Qstartriveq} with representations $F'(x)\delta x \sim (y'(x)\delta x, (F'(x)\delta x)_\tau)$:
\begin{equation}
    \label{eq:NewtonMapTriv}
 (Q_{F(x)}^*\circ F'(x)\delta x)(e)=(F'(x)\delta x)_\tau(e_\tau)+F_\tau(x)\circ B_{
 y(x),\tau}(e_\tau)y'(x)\delta x
\end{equation}
for all $e\in\E$ with $e_\tau=\tau_{\E,y}(e)\in \mathbb E$.

\subsection{Definition of Newton's method}\label{sec:DefNewton}
As we have seen in Sec.~\ref{subsec:NewtonEquation} dual connection maps can be used to obtain a well defined Newton equation~\eqref{eq:NewtonEquationDual}:
\begin{equation*}
    Q^*_{F(x)} \circ F^\prime(x)\delta x + F(x) = 0_{{y(x)}}^*.
\end{equation*}
The solution of this equation yields the Newton direction $\delta x\in T_x\X$.
Since the iterate $x\in \X$ and the Newton direction $\delta x\in T_x\X$ cannot be added, the classical additive Newton update $x_+=x+\delta x$ is also not well defined. To obtain a new iterate $x_+$, we have to map $\delta x\in T_x\X$ back to the manifold $\X$ in an appropriate way. This can be done by applying a \textit{retraction} (cf., e.g.,~\cite[Chap.~4]{AbsilMahonySepulchre:2008:1} or~\cite[Sec.~3.6]{Boumal:2023:1}) to the Newton direction.
\begin{definition}[Retraction]
    A $C^1$-\emph{retraction} on a manifold $\X$ is a $C^1$-mapping $R:T\X \rightarrow \X$, where its restriction $R_x\colon T_x\X\to \X$ to a fixed $x\in \X$ satisfies the following properties:
    \begin{itemize}
    \item[(i)] $R_x(0_x)=x$
    \item[(ii)] $R_x^\prime(0_x)=Id_{T_x\X}$
    \end{itemize}
\end{definition}
Thus, after successfully computing the Newton direction $\delta x\in T_x\X$ we use a local retraction to generate the new iterate, i.\,e.\ we perform a \textit{Newton step}, by
\begin{equation*}
    x_+ \coloneqq R_x(\delta x).
\end{equation*}
We summarize our results in Algorithm~\ref{alg:Newton}. It can be shown that under suitable semismoothness assumptions the method converges locally with a superlinear rate. For a detailed proof see~\cite[Prop. 5.10]{WeiglSchiela:2024}.
\begin{algorithm}[h]
    \caption{Newton's method on vector bundles}
    \label{alg:Newton}
        \begin{algorithmic}
            \REQUIRE $x_0$ (initial guess)
            \FOR {$k = 0,1,2,\ldots$}
                \STATE $\delta x_k \leftarrow Q_{F(x_k)}^* \circ F^\prime (x_k)\delta x_k + F(x_k) = 0_{{y(x_k)}}^*$
                \STATE $x_{k+1} = R_{x_k}(\delta x_k)$
            \ENDFOR
    \end{algorithmic}
\end{algorithm}

\begin{remark}
  In the case of a minimization problem $\min_{x\in \X} f(x)$ we obtain the variational equation $f'(x)=0_x^*$, and the corresponding Newton equation is formulated in the cotangent space $T_x\X^*$:
  \begin{equation}\label{eq:NewtonMinDer}
  Q^*_{f'(x)}\circ f''(x)\delta x+f'(x)=0^*_x.
  \end{equation}
  This is in accordance with the approach, discussed in \cite{smith1994optimization}.
  
  On Riemannian manifolds, where a Riesz-isomorphism $\mathcal R\in \Gamma(\L(T\X,T^*\X))$ is available, one frequently considers instead the equivalent problem $\mathrm{grad}\, f(x)=0_x$, where $\mathrm{grad}f(x)=\mathcal R_x^{-1}(f'(x)) \in T_x\X$ is the Riemannian gradient of $f$. This yields a Newton equation on the tangent space $T_x\X$ (cf. \cite[Section 6]{AbsilMahonySepulchre:2008:1}), using the covariant derivative, induced by the Levi-Civita connection $Q$ on $\X$:
  \begin{equation}\label{eq:NewtonMinGrad}
  \nabla\, \mathrm{grad}f(x)\delta x+\mathrm{grad}f(x)=0_x.
  \end{equation}
  The linear operator $\nabla\, \mathrm{grad}f(x):=Q_{\mathrm{grad}f(x)}\circ (\mathrm{grad}f)'(x)\in L(T_x\X,T_x\X)$ is called the Riemannian Hessian. 
  
  If $Q^*$ is the dual connection of $Q$, then \eqref{eq:NewtonMinDer} and \eqref{eq:NewtonMinGrad} yield the same Newton steps. However, \eqref{eq:NewtonMinDer} does not require any Riemannian structure on $\X$. In particular the application of $\mathcal R_x^{-1}$ for the computation of the gradient can be dispensed with. This is a practical advantage if, for example, $\X$ is a Sobolev manifold, where $\mathcal R_x^{-1}$ is expensive to evaluate. 
\end{remark}

%
%
\section{Affine covariant damping}%
\label{sec:AffineCovariantDamping}%
In~\cite[Sec. 6]{WeiglSchiela:2024}, a damped version of Newton's method was derived. Using an affine covariant damping strategy in the spirit of ~\cite{Deuflhard} this paper introduced a geometrically motivated strategy based on following so-called \emph{Newton paths}. For the sake of completeness and clarity, we recall the main ideas.

Consider a mapping $F\colon\X\to\E$ into a vector bundle $p\colon\E\to\Y$. Denote by $x(0)\in \X$ the starting point of the path. For $\alpha \in [0,1]$ and $x(\alpha)\in\X$ the base point of $F(x(\alpha)) \in \E$ is denoted by $y(\alpha)\coloneq p(F(x(\alpha)))\in \Y$. Let $\VTback{y} \in \Gamma(\L(\E,E_y))$ be a vector back-transport.
Then the \textit{Newton path problem}, which is based on the idea of scaling down the residual by a factor of $1-\alpha$, is given as
\begin{equation}
    \label{NewtonPath}
    \NE{\VTback{{y(0)}}}(F(x(\alpha))) = (1-\alpha) F(x(0)), \; \alpha \in [0,1].
\end{equation}
Since the residuals $F(x(\alpha))$ and $F(x(0))$ do not lie in the same fibre, we need a vector back-transport on the left-hand side. This vector back-transport allows us to formulate the Newton path problem on a linear space, namely in the fixed fibre $E_{y(0)}$. The mapping $\alpha \mapsto x(\alpha)$ (where it is defined) is called the \textit{algebraic Newton path} starting at $x(0)$. Then, the idea is to scale the tangential direction at $x(0)$ by a suitable
damping factor $\alpha\in (0,1]$ in order to follow the
Newton path. For our algorithm it is crucial that the vector back-transport $\VTback{y}$ used to define a Newton path and the connection map $Q_e$ at $e$ with $p(e)=y$ used to compute Newton directions are consistent, i.\,e.\ $Q_e = \NE{\VTback{y}}'(e)$. For this choice it can be shown that Newton directions are tangential to the Newton paths (cf.~\cite[Sec. 6]{WeiglSchiela:2024}). \\
\newline
In the following we will work out the quantities necessary for the algorithm in the case of dual vector bundles and include an adapted algorithm. We will see that for dual bundles the necessary back-transports of the covectors $l\in \E^*$ can be constructed from adjoints of vector forward transports in the primal bundle. With this, we can derive consistent dual connection maps by differentiation.

\subsection{Dual connections induced by covector back-transports.}
Let $p\colon\E\to\Y$ be a vector bundle and $p^*\colon\E^* \to \Y$ its dual bundle.
Consider a vector transport $\VTforward{y} \in \Gamma(\L(\Y \times E_{y}, \E))$.
We can define a \emph{covector back-transport} $\VTstar{y} \in \Gamma(\L(\E^*, E_{y}^*))$, which acts as a vector back-transport on the dual bundle by using fibrewise adjoints $\VTstar{y}(\hat{y})\coloneqq(\VTforward{y}(\hat{y}))^* \in L(E_{\hat{y}}^*,E_{y}^*)$, i.e.
\begin{equation*}
 \VTstar{y}(\hat{y})\hat l \coloneqq \hat l \circ \VTforward{y}(\hat{y}) \quad \text{ for all } \hat l \in E_{\hat{y}}^*.
\end{equation*}
Then, we can define a connection map on the dual bundle $p^*\colon\E^* \to \Y$ via this covector back-transport by differentiation: For $l \in \E^*$ we set
\begin{equation}\label{eq:dualconnection}
 Q_{l}^*\coloneqq \NE{\VTstar{y}}'(l):T_{l}\E^*  \to E^*_{y}, \;  y=p^*(l).
\end{equation}
In the following we will show that this defines a dual connection map on $\E^*$. To prove this, we first define an appropriate connection map on the primal bundle $\E$ and then show that the product rule~\eqref{eq:defdualconnection} is satisfied for these choices.

Using the vector transport $\VTforward{y}$ we can locally define a vector back-transport on the vector bundle $\E$ by taking fibrewise inverses, i.\,e.\ we set
\begin{equation*}
    \VTback{y}(\hat{y}) \coloneqq \left(\VTforward{y}(\hat{y})\right)^{-1} \colon E_{\hat{y}} \to E_y.
\end{equation*}
Then, we can define a connection map on the primal bundle $p\colon\E \to \Y$ by setting $Q_e = \NE{\VTback{y}}'(e)$ for $e\in \E$ with $y=p(e)$.
For the sake of clarity, we repeat a result on the representation of $Q_e$ in trivializations (cf.~\cite[Lemma 2.2]{WeiglSchiela:2024}).
\begin{lemma}
\label{lem:VTconnection}
Let $\VTforward{y}\in \Gamma(\L(\Y\times E_y,\E))$ be a vector transport and $\VTback{y}$ be a vector back-transport defined via fibrewise inverses. Then,
\begin{equation*}
    Q_{e}\coloneqq \NE{\VTback{y}}'(e):T_{e}\E \to E_{y}
\end{equation*}
defines a linear connection map at $e\in \E$, $y= p(e)$, which is represented in trivializations by~\eqref{eq:Qtriv}  with
\begin{equation}\label{eq:diffVtTriv}
B_{y,\tau}(e_\tau)\delta y = -(\VTprime{{y,\tau}}{\leftarrow}(y)\delta y) e_\tau = (\VTprime{{y,\tau}}{\rightarrow}(y)\delta y) e_\tau.
\end{equation}
\end{lemma}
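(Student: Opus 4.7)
The plan is to compute the derivative $\langle V^\leftarrow_y\rangle'(e)$ in a local trivialization and to read off its representation in the canonical form \eqref{eq:Qtriv}. To this end I would fix a trivialization $\tau$ of $\E$ around $y$, represent $\VTforward{y}$ and $\VTback{y}$ as smooth maps $\hat y \mapsto \VTprime{y,\tau}{\rightarrow}(\hat y),\,\VTprime{y,\tau}{\leftarrow}(\hat y)\in L(\mathbb E,\mathbb E)$, and use that by definition $\VTprime{y,\tau}{\leftarrow}(\hat y)\,\VTprime{y,\tau}{\rightarrow}(\hat y)=Id_{\mathbb E}$, with $\VTprime{y,\tau}{\leftarrow}(y)=\VTprime{y,\tau}{\rightarrow}(y)=Id_{\mathbb E}$.

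Next I would apply the product-rule formula \eqref{trivializationDerivative} for the derivative of the natural inclusion of a fibrewise linear mapping to $S=\VTback{y}$. Evaluating at the pair $(y,e_\tau)$ representing $e$ and using $\VTprime{y,\tau}{\leftarrow}(y)=Id_{\mathbb E}$, this yields
\begin{equation*}
\NE{\VTback{y}}'_\tau(y,e_\tau)(\delta y,\delta e_\tau)
 = \delta e_\tau + \bigl(\VTprime{y,\tau}{\leftarrow}{}'(y)\delta y\bigr)\,e_\tau.
\end{equation*}
Comparing with \eqref{eq:Qtriv} identifies $B_{y,\tau}(e_\tau)\delta y = -\bigl(\VTprime{y,\tau}{\leftarrow}{}'(y)\delta y\bigr)e_\tau$, which is the first equality in \eqref{eq:diffVtTriv}. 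For the second equality I would differentiate the identity $\VTprime{y,\tau}{\leftarrow}(\hat y)\,\VTprime{y,\tau}{\rightarrow}(\hat y)=Id_{\mathbb E}$ at $\hat y=y$: by the product rule combined with $\VTprime{y,\tau}{\leftarrow}(y)=\VTprime{y,\tau}{\rightarrow}(y)=Id_{\mathbb E}$, one obtains $\VTprime{y,\tau}{\leftarrow}{}'(y)\delta y + \VTprime{y,\tau}{\rightarrow}{}'(y)\delta y = 0$, which gives the sign flip between the two expressions for $B_{y,\tau}(e_\tau)\delta y$.

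Finally I would verify that $Q_e$ really is a linear connection map. Since the vertical subspace $\mathrm{Vert}_e$ is represented by pairs $(0_y,\delta e_\tau)$ and the above formula reduces to $Q_{e,\tau}(0_y,\delta e_\tau)=\delta e_\tau$, we get $Q_e|_{E_y}=Id_{E_y}$ under the canonical identification. Linearity in the fibre follows from the criterion after \eqref{eq:Qtriv}: the map $e_\tau\mapsto B_{y,\tau}(e_\tau)\delta y$ is linear because $\VTprime{y,\tau}{\leftarrow}{}'(y)\delta y\in L(\mathbb E,\mathbb E)$ acts linearly on $e_\tau$, so $(\delta y,e_\tau)\mapsto B_{y,\tau}(e_\tau)\delta y$ is bilinear as required.

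The proof is essentially a careful bookkeeping exercise in trivializations; there is no conceptual obstacle. The one pitfall I would watch for is keeping the two arguments of $\VTforward{y}$ and $\VTback{y}$ strictly separated: the first index $y$ is frozen as the reference base point, so only differentiation in the second argument $\hat y$ enters, and this must be done before evaluating at $\hat y = y$ in order to avoid collapsing the derivative to zero.
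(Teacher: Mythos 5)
Your proof is correct and follows the expected route: apply the trivialized product rule \eqref{trivializationDerivative} to $S=\VTback{y}$, use $\VTback{y,\tau}(y)=Id_{\mathbb E}$ and compare with \eqref{eq:Qtriv} to read off $B_{y,\tau}(e_\tau)\delta y=-(\VTback{y,\tau}'(y)\delta y)\,e_\tau$, then differentiate the identity $\VTback{y,\tau}(\hat y)\circ\VTforward{y,\tau}(\hat y)=Id_{\mathbb E}$ at $\hat y=y$ for the sign relation, and finally verify the projection and fibrewise-linearity properties. Note that the paper does not reprove this lemma here but cites \cite[Lemma~2.2]{WeiglSchiela:2024}; your argument is the natural one and matches what that reference establishes.
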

Using this, we can show that~\eqref{eq:dualconnection} defines a corresponding dual connection map, i.\,e.\ $Q_e$ and $Q_l^*$ satisfy the product rule.
\begin{proposition}\label{pro:propdual}
    Let $\VTforward{y}\in \Gamma(\L(\Y\times E_y,\E))$ be a vector transport. The choice~\eqref{eq:dualconnection} defines the dual connection map $Q_l^*$ on $\E^*$ corresponding to the connection map $Q_e = \NE{\VTback{y}}'(e)$ at $e$ with $y=p(e)=p^*(l)$ on $\E$ where $\VTback{y}$ is defined by taking fibrewise inverses.
\end{proposition}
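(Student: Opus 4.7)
The statement to be proved is that the dual connection map induced by the covector back-transport, $Q_l^* = \langle \VTstar{y}\rangle'(l)$, agrees with the dual connection of $Q_e = \langle \VTback{y}\rangle'(e)$ in the sense of Definition~\ref{def:dualconnection}. Since the previous proposition showed that a dual connection map is uniquely characterized in trivializations by the formula \eqref{eq:Qstartriveq}, the plan is to compute the trivialization of $Q_l^*$ directly and verify it matches the right-hand side of \eqref{eq:Qstartriveq} when $B_{y,\tau}$ is the expression for the primal $Q_e$ coming from Lemma~\ref{lem:VTconnection}.

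\textbf{Step 1: Apply Lemma~\ref{lem:VTconnection} to $\VTstar{y}$.} The map $\VTstar{y} \in \Gamma(\L(\E^*, E_y^*))$ is by construction a vector back-transport on the dual bundle, so Lemma~\ref{lem:VTconnection} applies verbatim (with $\E$ replaced by $\E^*$). It yields that $Q_l^* = \langle \VTstar{y}\rangle'(l)$ is a linear connection map on $\E^*$ with trivialization
\begin{equation*}
  Q_l^*\delta l \sim \delta l_\tau - B^*_{y,\tau}(l_\tau)\delta y, \qquad B^*_{y,\tau}(l_\tau)\delta y = -\bigl(\VTstar{y,\tau}{}'(y)\delta y\bigr)\, l_\tau.
\end{equation*}
So the task reduces to expressing $\VTstar{y,\tau}{}'(y)\delta y$ in terms of the primal quantities.

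\textbf{Step 2: Identify the trivialization of $\VTstar{y}$.} Writing out the definition, for $\hat l \in E_{\hat y}^*$ with trivialization $\hat l_\tau = \hat l \circ \tau_{\E,\hat y}^{-1}$ and using \eqref{eq:leTriv}, I compute
\begin{equation*}
  \bigl(\VTstar{y}(\hat y)\hat l\bigr)_\tau = \hat l \circ \VTforward{y}(\hat y)\circ \tau_{\E,y}^{-1} = \hat l_\tau \circ \VTforward{y,\tau}(\hat y),
\end{equation*}
so the trivialization of $\VTstar{y}$ is simply the fibrewise adjoint $\VTstar{y,\tau}(\hat y) = \VTforward{y,\tau}(\hat y)^*$. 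Differentiating the $\hat y$-dependence and using that adjoint is linear and continuous, $\bigl(\VTstar{y,\tau}{}'(y)\delta y\bigr)l_\tau = l_\tau \circ \bigl(\VTforward{y,\tau}{}'(y)\delta y\bigr)$.

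\textbf{Step 3: Match with \eqref{eq:Qstartriveq}.} Combining Steps 1 and 2 gives
\begin{equation*}
  (Q_l^*\delta l)(v_\tau) = \delta l_\tau(v_\tau) + l_\tau\Bigl(\bigl(\VTforward{y,\tau}{}'(y)\delta y\bigr) v_\tau\Bigr).
\end{equation*}
On the primal side, Lemma~\ref{lem:VTconnection} (as stated in the excerpt, using \eqref{eq:diffVtTriv}) asserts that $B_{y,\tau}(v_\tau)\delta y = \bigl(\VTforward{y,\tau}{}'(y)\delta y\bigr) v_\tau$. Substituting this into the previous display reproduces exactly \eqref{eq:Qstartriveq}, which characterizes the dual connection of $Q_e$. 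By the previous proposition, $Q_l^*$ is therefore the dual connection map corresponding to $Q_e$.

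\textbf{Main obstacle.} The calculation is elementary, so the only real care is bookkeeping: one must distinguish the roles of the trivialization of $\E$ (used for $e \in \E$ and indirectly for $\VTforward{y}$) from the induced trivialization of $\E^*$ (used for $l \in \E^*$ and for $\VTstar{y}$) and verify that the adjoint operation genuinely commutes with $\partial_{\hat y}$ at $\hat y = y$. Once the identity $\VTstar{y,\tau}(\hat y) = \VTforward{y,\tau}(\hat y)^*$ is in place, both the sign in $B^*$ and the apparent discrepancy with the sign convention in Lemma~\ref{lem:VTconnection} cancel cleanly, and the match with \eqref{eq:Qstartriveq} is immediate.
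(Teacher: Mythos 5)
Your proof is correct, and it is essentially the same computation as the paper's, just packaged differently. The paper proves the proposition by directly verifying the defining product rule~\eqref{eq:defdualconnection} in trivializations: it expands $\bigl(\NE{\VTstar{y}}'(\ell(y))\ell'(y)\delta y\bigr)(e(y))$ via~\eqref{trivializationDerivative}, expands the right-hand side of~\eqref{eq:defdualconnection}, and sees that the two match using~\eqref{eq:diffVtTriv}. You instead compute the trivialization of $Q_l^*$ by applying Lemma~\ref{lem:VTconnection} to the dual bundle, and then check it against the trivialization characterization~\eqref{eq:Qstartriveq} of the dual connection established in the previous proposition. The key identity in both routes is the same: $l_\tau\bigl((\VTforward{y,\tau}{}'(y)\delta y)v_\tau\bigr) = l_\tau\bigl(B_{y,\tau}(v_\tau)\delta y\bigr)$ via~\eqref{eq:diffVtTriv}. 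Your version is slightly more modular since it reuses both Lemma~\ref{lem:VTconnection} (now on $\E^*$) and the trivialization formula rather than re-deriving the trivialized form of $Q_l^*$ from scratch, at the cost of one small gap you should close: when you say ``Lemma~\ref{lem:VTconnection} applies verbatim'' to $\VTstar{y}$, that lemma assumes the back-transport arises as the fibrewise inverse of a forward transport. You should note that this holds here because $\VTstar{y}(\hat y) = \bigl(\VTforward{y}(\hat y)\bigr)^* = \bigl((\VTback{y}(\hat y))^*\bigr)^{-1}$, so $\VTstar{y}$ is indeed the fibrewise inverse of the covector forward transport $\hat y \mapsto (\VTback{y}(\hat y))^*$, and $\VTstar{y}(y)=\mathrm{Id}_{E_y^*}$. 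With that sentence added, the argument is complete.
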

\begin{proof}
    Let $\ell \in \Gamma(\E^*)$, $e\in\Gamma(\E)$, and $\delta y\in T_y\Y$. We will show that the product rule~\eqref{eq:defdualconnection} is satisfied for $Q_{e(y)} = \NE{\VTback{y}}^\prime(e(y))$ and $Q_{\ell(y)}^* = \NE{\VTstar{y}}'(\ell(y))$. In trivializations, if $e(y)$ is represented by $(y, e_\tau(y))$, $\ell(y)$ by $(y, \ell_\tau(y))$ and $\ell'(y)\delta y$ by $(\delta y, \delta \ell_\tau)$, we obtain for the left hand side of~\eqref{eq:defdualconnection} by using $\NE{\VTstar{y}}(\ell(\hat y)) = \ell(\hat y) \circ \VTforward{y}(\hat y)$ and~\eqref{trivializationDerivative}:
    \begin{equation*}
        \left(\NE{\VTstar{y}}'(\ell(y))\ell'(y)\delta y\right)(e(y)) = \delta \ell_\tau(e_\tau(y)) + \ell_\tau(y) \circ \VTforward{y,\tau}'(y)\delta y(e_\tau(y)).
    \end{equation*}
    The right hand side of~\eqref{eq:defdualconnection} has the representation
    \begin{align*}
 \ell(e)'(y) \delta y - \ell(y)( Q_{e(y)}\circ e'(y) \delta y)&=\ell_\tau(e_\tau)'(y)  - \ell_\tau(y)(\delta e_\tau-B_{y,\tau}(e_\tau(y))\delta y)\\
 &=\delta \ell_\tau(e_\tau(y))+\ell_\tau(y)(B_{y,\tau}(e_\tau(y))\delta y).
\end{align*}
    Since $B_{y,\tau}(e_\tau(y))\delta y = -(\VTback{y,\tau}'(y)\delta y)e_\tau(y) = (\VTforward{y,\tau}'(y)\delta y)e_\tau(y)$ by~\eqref{eq:diffVtTriv} these terms coincide in all trivializations and thus, we obtain the desired result.
\end{proof}
The following proposition describes a practical way how to apply a dual connection to $F' \in \Gamma(\L(T\X, F^*T\E))$. It will be useful for us in Section~\ref{sec:applications} below.
\begin{proposition}
    \label{prop:computationDualConnection}
    Let $\VTforward{y}\in \Gamma(\L(\Y\times E_y,\E))$ be a vector transport. Consider the dual connection $Q^*$ given by \eqref{eq:dualconnection}.  Let $F \in C^1(\X,\E^*)$. Then for all $x\in\X$ and $e\in E_{y(x)}$ the following holds:
   \begin{equation*}
    (Q^*_{F(x)}\circ F'(x)\delta x)(e)=\frac{d}{d\xi}\Big(F(\xi)(\VTforward{y(x)}(y(\xi))e)\Big)\Big|_{\xi=x}\delta x \qquad \text{ for all } \delta x \in T_x\X.
   \end{equation*}
\end{proposition}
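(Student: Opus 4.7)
The plan is to recognize that the right-hand side is simply a chain-rule expansion of the left-hand side, once we identify the map $\xi \mapsto F(\xi)(\VTforward{y(x)}(y(\xi))e)$ as the scalar obtained by applying the $C^1$-mapping $\NE{\VTstar{y(x)}}\circ F$ to $\xi$ and then evaluating the resulting element of the fixed fibre $E_{y(x)}^*$ at $e$.

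First, I would unpack the definitions. By \eqref{eq:dualconnection} the dual connection map at $F(x)$ is $Q^*_{F(x)} = \NE{\VTstar{y(x)}}'(F(x))$, viewed as a linear map $T_{F(x)}\E^* \to E^*_{y(x)}$. By the construction just before~\eqref{eq:dualconnection}, the natural inclusion of the back-transport acts as
\begin{equation*}
   \NE{\VTstar{y(x)}}(l) \;=\; \VTstar{y(x)}(p^*(l))\,l \;=\; l \circ \VTforward{y(x)}(p^*(l)) \quad \text{for } l\in\E^*.
\end{equation*}
Since this is a mapping into the fixed fibre $E_{y(x)}^*$, evaluation at $e\in E_{y(x)}$ is a well-defined real-valued pairing that commutes with differentiation.

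Next I would apply the chain rule to the composition $\NE{\VTstar{y(x)}}\circ F:\X\to E_{y(x)}^*$ at the point $x$ in direction $\delta x$:
\begin{equation*}
    (\NE{\VTstar{y(x)}}\circ F)'(x)\delta x \;=\; \NE{\VTstar{y(x)}}'(F(x))\,\bigl(F'(x)\delta x\bigr) \;=\; Q^*_{F(x)}\circ F'(x)\delta x.
\end{equation*}
Evaluating both sides at $e\in E_{y(x)}$ and using $y(\xi)=p^*(F(\xi))$ together with the formula for $\NE{\VTstar{y(x)}}$ gives
\begin{equation*}
    \bigl(Q^*_{F(x)}\circ F'(x)\delta x\bigr)(e) \;=\; \frac{d}{d\xi}\Big(F(\xi)\bigl(\VTforward{y(x)}(y(\xi))e\bigr)\Big)\Big|_{\xi=x}\delta x,
\end{equation*}
which is the claimed identity.

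The only delicate point to check is that evaluation at $e$ may be pulled inside the $\xi$-derivative; this is legitimate precisely because $\NE{\VTstar{y(x)}}\circ F$ takes values in the fibre $E_{y(x)}^*$, which is a fixed Banach space independent of $\xi$. No trivialization-level calculation is required, since Proposition~\ref{pro:propdual} already certifies that \eqref{eq:dualconnection} is a valid dual connection, and the statement here is purely a reformulation via the chain rule. I expect this check of the fixed-fibre structure to be the only place where one must pause; the rest is essentially definitional.
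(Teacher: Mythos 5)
Your proof is correct, and it takes a genuinely different, more intrinsic route than the paper's. You identify the map $\xi \mapsto F(\xi)\bigl(\VTforward{y(x)}(y(\xi))e\bigr)$ as the evaluation at $e$ of the composition $\NE{\VTstar{y(x)}}\circ F\colon \X\to E_{y(x)}^*$, apply the chain rule (which immediately produces $\NE{\VTstar{y(x)}}'(F(x))\circ F'(x) = Q^*_{F(x)}\circ F'(x)$ by the very definition~\eqref{eq:dualconnection}), and then pull evaluation at $e$ through the derivative, which is legitimate because the composite maps into the fixed linear space $E_{y(x)}^*$. The paper instead verifies the identity by passing to a local trivialization, expanding $F_\tau(\xi)\bigl(\VTforward{y(x),\tau}(y(\xi))e_\tau\bigr)$ via the product rule, and matching the result against the trivialized form~\eqref{eq:NewtonMapTriv} of the Newton operator using~\eqref{eq:diffVtTriv}. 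Your argument avoids trivializations entirely and makes it transparent that the proposition is really just the chain rule applied to the natural inclusion; the paper's computation, while more laborious, has the benefit of exhibiting the concrete coordinate expression $(F'(x)\delta x)_\tau(e_\tau)+F_\tau(x)\circ B_{y(x),\tau}(e_\tau)y'(x)\delta x$, which is what one actually assembles in code. One small remark: your appeal to Proposition~\ref{pro:propdual} is unnecessary for this statement --- you only use the definition $Q^*_{F(x)}=\NE{\VTstar{y(x)}}'(F(x))$, not the fact that this is dual to a primal connection --- so that reference could be dropped without loss.
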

\begin{proof}
        Let $x\in \X$ and $e\in E_{y(x)}$. We compute, using local trivializations $\tau$ of $\E$,~\eqref{eq:leTriv}, the product rule and~\eqref{eq:NewtonMapTriv} with $B_{y(x),\tau}(e_\tau)y'(x)\delta x = \VTforward{y(x),\tau}'(y(x))y'(x)\delta x(e_\tau)$ for arbitrary $\delta x\in T_x\X$:
\begin{align*}
 \frac{d}{d\xi}\Big(F(\xi)(\VTforward{y(x)}(y(\xi))e)\Big)\Big|_{\xi=x}\delta x &= \frac{d}{d\xi}\Big(F_\tau(\xi)(\VTforward{y(x),\tau}(y(\xi))e_\tau)\Big)\Big|_{\xi=x}\delta x\\
 &=(F'(x)\delta x)_\tau(e_\tau)+(F_\tau(x)\circ \VTforward{y(x),\tau}'(y(x))y'(x)\delta x)(e_\tau)\\
 &=Q^*_{F(x),\tau}(y'(x)\delta x, (F'(x)\delta x)_\tau)(e_\tau) \\
 &= (Q^*_{F(x)}\circ F'(x)\delta x)(e).
\end{align*}    
\end{proof}
In accordance with Def. \ref{def:consistentConnectionPrimal} we call a covector back-transport $\VTstar{y}\in \Gamma(\L(\E^*, E_y^*))$  \emph{consistent} with a connection map $Q^* \in \Gamma(\L(T\E^*, (p^*)^*\E^*))$ at $l \in \E^*$ with $y=p^*(l)$, if $Q^*_l = \NE{\VTstar{y}}^\prime(l)$.

\subsection{An affine covariant damped Newton method}
In the following we assume that the dual connection map $Q^*$ on $\E^*$ is consistent with a covector back-transport $\VTstar{y} \in \Gamma(\L(\E^*, E_y^*))$, i.\,e.\ $ Q_{l}^*\coloneqq \NE{\VTstar{y}}^\prime(l)$ for $l\in \E^*$.

We employ a damping factor $\alpha \in (0,1]$ to compute a damped Newton update in the following way:
\begin{equation*}
  x_+=R_x(\alpha \delta x).
\end{equation*}
To assess, whether $x_+$ is acceptable, we use the affine covariant quantity
\begin{equation*}
    \theta_{x_+}(x) = \cfrac{\lVert\overline{\delta x_+^\alpha}\rVert_x}{\lVert\alpha \delta x\rVert_x}
\end{equation*}
which contains computing the simplified Newton direction $\overline{\delta x_+^\alpha} \in T_x\X$ (for a detailed motivation we refer to~\cite{WeiglSchiela:2024} and~\cite{Deuflhard}). Using the consistency of the covector back-transport and the dual connection map, this direction can be computed by solving the linear equation
\begin{equation}
    \label{eq:SimplifiedNewtonDual}
    Q^*_{F(x)} \circ F'(x)\overline{\delta x_+^\alpha} + \VTstar{y(x)}(y(x_+))F(x_+) - (1-\alpha)F(x) = 0_{y(x)}^*.
\end{equation}
In particular, the covector back-transport needed here can be performed via a forward transport of the test-vectors:
\begin{equation*}
 \VTstar{y(x)}(y(x_+))F(x_+)(e)=F(x_+)(\VTforward{y(x)}(y(x_+))e) \quad \text{ for all } e\in E_{y(x)}.
\end{equation*}
Then~\eqref{eq:SimplifiedNewtonDual} can be stated equivalently as
\begin{equation*}
    Q^*_{F(x)} \circ F'(x)\overline{\delta x_+^\alpha}e + F(x_+)(\VTforward{y(x)}(y(x_+))e) - (1-\alpha)F(x)e = 0 \quad \text{ for all } e\in E_{y(x)}.
\end{equation*}
\begin{remark}
By Proposition~\ref{prop:computationDualConnection} we see that $Q^*_{F(x)} \circ F'(x)$ can be computed by differentiating these vector transports of the test vectors at $x$ to obtain a consistent connection. This is of central practical importance for the numerical computations, presented below. We also stress that our requirement of consistency gives us a lot of flexibility for numerical implementations, since we are not confined to a specific, say Levi-Civita, connection. Instead, we are free to choose a 
vector transport, and define our connection accordingly. 
\end{remark}

The resulting algorithm can then be stated as summarized in Algorithm~\ref{GlobalerNewton}.
\begin{algorithm}[h]
    \caption{Affine covariant damped Newton's method}
    \label{GlobalerNewton} 
    \begin{algorithmic}[1]
    \REQUIRE $x, \; \alpha \text{ (initial guesses)}; \; \alpha_{fail}, \; TOL$, $\Theta_{des} < \Theta_{acc}$ (parameters)
    \REPEAT
        \STATE solve $\delta x \leftarrow (Q^*_{F(x)} \circ F^\prime (x))\delta x + F(x) = 0^*_{y(x)}$
            \REPEAT
                \STATE compute $x_+ = R_x(\alpha \delta x)$
                \STATE solve $\overline{\delta x^\alpha_+} \leftarrow  Q^*_{F(x)} \circ F^\prime (x) \overline{\delta x^\alpha_+} + \VTstar{y(x)}(y(x_+))F(x_+) - (1-\alpha)F(x) = 0_{y(x)}^*$
                \STATE compute $\theta_{x_+}(x) = \cfrac{\lVert\overline{\delta x_+^\alpha}\rVert_x}{\lVert\alpha \delta x\rVert_x}$
                \STATE update $\alpha \leftarrow \min \left(1, \cfrac{\alpha\Theta_{des}}{\theta_{x_+}(x)}\right)$
                 \IF{$\alpha < \alpha_{fail}$}
                       \STATE \textbf{terminate:} "Newton's method failed"
                \ENDIF
            \UNTIL{$\theta_{x_+}(x) \leq \Theta_{acc}$}
        \STATE  update $x \leftarrow x_+$
        \IF{$\alpha = 1 \text{ and } \theta_{x_+}(x) \leq \frac{1}{4} \text{ and } \lVert\delta x\rVert_x \leq TOL$}
            \STATE \textbf{terminate:} "Desired Accuracy reached", $x_{out} = x_+$
        \ENDIF
    \UNTIL{maximum number of iterations is reached}
    \end{algorithmic}
\end{algorithm}
%
%
\section{Connections for mappings into embedded vector bundles}%
\label{sec:CovDerivEmbedded}%
In our numerical examples we will consider mappings into vector bundles which are embedded into Hilbert spaces. In view of our globalization strategy, we need primal and dual connections that are consistent with vector transports. Thus, we will now discuss an easy way to derive vector transports and corresponding connection maps in this case by using orthogonal projections onto the fibres.\\
\newline
Let $p\colon \E \to \Y$ be a vector bundle that is embedded into a Hilbert space $\Hilbert$. Then we can use the embedding $\iota_{\Hilbert}\colon \E \to \Hilbert$ with $\iota_{\Hilbert}(y)\colon E_{y}\to \Hilbert$ for $y\in \Y$ and the orthogonal projection $P(y)\colon \Hilbert \to E_y$ onto the fibre to define a vector back-transport locally by
\begin{align*}
    \VTback{y}(\eta) \coloneqq P(y)\iota_{\Hilbert}(\eta) \in L(E_{\eta},E_y).
\end{align*}
Analogously, we can define a vector transport locally by
\begin{equation*}
    \VTforward{y}(\eta) \coloneqq P(\eta)\iota_{\Hilbert}(y) \in L(E_y, E_{\eta}).
\end{equation*}
A covector back-transport is then given by taking fibrewise adjoints $\VTstar{y}({\eta}) \coloneqq (\VTforward{y}({\eta}))^*$, i.e.
\begin{equation}
    \label{eq:CovectorTransportProjection}
    \VTstar{y}({\eta})l = l \circ \VTforward{y}({\eta})=l \circ P({\eta})\circ \iota_{\Hilbert}(y) \quad \text{ for all } l\in E_{\eta}^*.
\end{equation}
As seen before, we may obtain consistent connection maps by differentiation.
\subsection*{Primal connection maps} Using the natural inclusion $\varphi$ given by \eqref{eq:defNaturalInclusion} we obtain the $C^1$-mapping $\NE{\VTback{y}} \colon \E \to E_y$. Let $e \in \Gamma(\E)$. Since $p(e(\eta)) = \eta$, we get $\NE{\VTback{y}}(e(\eta)) = P(y)\iota_{\Hilbert}(\eta)e(\eta)$. Differentiating this with respect to ${\eta}$ at ${\eta} = y$, we obtain
\begin{equation}
    \label{eq:EmbeddedQPrimal}
    Q_{e(y)}\circ e'(y)\delta y=\NE{\VTback{y}}'(e(y))e'(y)\delta y = P(y)(\iota_{\Hilbert}e)'(y)\delta y \in E_y.
\end{equation}

\begin{remark}
For a vector field $\nu\colon \X \to T\X$ we obtain by using~\eqref{eq:EmbeddedQPrimal} and $\pi(\nu(x)) = x$ for $x\in \X$
\begin{equation*}
    Q_{\nu(x)}\circ \nu'(x)\delta x=\NE{\VTback{x}}'(\nu(x))\nu'(x)\delta x = P(x)(\iota_{\Hilbert}\nu)'(x)\delta x \in T_x\X,
\end{equation*}
which is consistent with the orthogonal projection.
The \emph{covariant derivative} $Q\circ \nu'$ of a vector field $\nu$ w.r.t.\ the connection map $Q$ can therefore be determined by orthogonally projecting the derivatives of the embedded vector field onto $T\X$. The connection which belongs to this covariant derivative is sometimes called \textit{tangential connection} and defines the Levi-Civita connection on $T\X$, see e.g.~\cite[Chap. 5]{Lee:2018:1}.
\end{remark}

\subsection*{Dual connection maps} Consider $\ell_{\Hilbert}\colon \Y \to \Hilbert^*$ and its restriction $\ell \colon \Y \to \E^*$, given by $\ell({\eta})\coloneqq\ell_{\Hilbert}({\eta})\circ \iota_{\Hilbert}({\eta})$.
By the natural inclusion $\varphi$, we obtain the $C^1$-mapping $\NE{\VTstar{y}}(\ell({\eta})) = \ell_{\Hilbert}({\eta}) \circ \iota_{\Hilbert}({\eta})\circ \VTforward{y}({\eta})$.
Differentiating this with respect to ${\eta}$ at ${\eta} = y = p^*(\ell)$ yields
\begin{align*}
    \NE{\VTstar{y}}'(\ell(y))\ell'(y)\delta y &= \ell_{\Hilbert}'(y)\delta y + \ell_{\Hilbert}(y)(\iota_{\Hilbert}\circ \VTforward{y})'(y)\delta y\\
    &= \ell_{\Hilbert}'(y)\delta y + \ell_{\Hilbert}(y)(\iota_{\Hilbert}\circ P)'(y)\iota_{\Hilbert}(y)\delta y,
\end{align*}
or, suppressing the embeddings notationally:
\begin{align}
    \label{eq:Qstarembedded}
    Q^*_{\ell(y)}\circ \ell'(y)\delta y=\NE{\VTstar{y}}'(\ell(y))\ell'(y)\delta y = \ell_{\Hilbert}'(y)\delta y + \ell_{\Hilbert}(y)P'(y)\delta y \in E_y^*.
\end{align}
Vice versa, if $\ell \in \Gamma(\E^*)$ is given, we may extend $\ell$ to $\ell_H\colon \Y \to \Hilbert^*$, for example by specifying $\ell_{\Hilbert}(y)$ on $\iota_{\Hilbert}(E_y)^\perp$.
Then by its intrinsic definition $Q^*_{\ell(y)}\circ \ell'(y)\delta y$ is independent of the extension, and thus also~\eqref{eq:Qstarembedded} is independent of the extension.

\subsection*{Submersed vector bundles.}
Let us consider a smooth fibrewise linear mapping $A \colon \mathcal Y \to L(\Hilbert,\RH)$ and $A(y)$ is assumed to be surjective for each $y\in \mathcal Y$. Let $p\colon \E \to \Y$ be defined by $E_y=\mathrm{ker}\, A(y)$.
For a covector field $\ell_{\Hilbert}\colon \mathcal Y \to \Hilbert^*$ and its restriction $\ell\colon \mathcal Y \to \mathcal E^*$ we consider the problem:
\begin{equation*}
 0^*_y = \ell(y) \quad \Leftrightarrow \quad  0 = \ell_{\Hilbert}(y)e \quad \text{ for all } e \in E_y
\end{equation*}
In view of~\eqref{eq:Qstarembedded} we obtain for the connection, induced by the orthogonal projection $P(y)\colon {\Hilbert} \to E_y$:
\begin{equation}\label{eq:connorth}
 Q^*_{\ell(y)}\ell'(y)\delta y=\ell_{\Hilbert}'(y)\delta y+\ell_{\Hilbert}(y)P'(y)\delta y \quad \text{ for all } \delta y \in T_y\Y.
\end{equation}
Since $A(y)$ is surjective, the restriction $A_K(y)\colon\mathrm{ker}\, A(y)^\perp \to \RH$ is an isomorphism by the open mapping theorem, and so is its adjoint $A_K(y)^* \colon \RH^* \to (\mathrm{ker} A(y)^\perp)^*$.
This means that there is a unique \emph{Lagrangian multiplier}
\[
\lambda(y):=-(A_K(y)^*)^{-1}\ell_{\Hilbert}(y)\in \RH^*,
\]
satisfying
\begin{equation}\label{eq:Lagrange}
(\ell_{\Hilbert}(y)+\lambda(y) A(y))w=0 \quad \text{ for all } w\in \mathrm{ker}\, A(y)^\perp,
\end{equation}
and thus
\[
 \ell_{\Hilbert}(y)e=0 \quad \text{ for all } e\in \mathrm{ker}\,A(y) \quad \Leftrightarrow \quad (\ell_{\Hilbert}(y)+\lambda(y) A(y))v \quad \text{ for all } v \in {\Hilbert}^*.
\]
Using $\mathrm{ker}\, A(y)=\mathrm{ran}\,P(y)$ we can write \eqref{eq:Lagrange} as
\begin{equation*}
\begin{split}
 0&=(\ell_{\Hilbert}(y)+\lambda(y) A(y))(Id_{\Hilbert}-P(y))v\\
 &=\ell_{\Hilbert}(y)(Id_{\Hilbert}-P(y))v+\lambda(y) A(y)v \quad \text{ for all } v\in {\Hilbert}.
 \end{split}
\end{equation*}
Differentiation with respect to $y$ in direction $\delta y\in T_y\Y$ yields for all $v\in H$:
\begin{equation*}
 0=\Big((\ell'_{\Hilbert}(y)\delta y)(Id_{\Hilbert}-P(y))-\ell_{\Hilbert}(y)P'(y)\delta y+(\lambda'(y)\delta y)\,A(y)+\lambda(y) A'(y)\delta y\Big)v
\end{equation*}
and thus, testing with $e\in \mathrm{ker}\,A(y)$, so that $A(y)e=(Id_{\Hilbert}-P(y))e=0$, we get:
\begin{equation*}
 \lambda(y) A'(y)(\delta y,e)=\ell_{\Hilbert}(y)P'(y)\delta y\, e \quad \text{ for all } e\in \mathrm{ker}\, A(y).
\end{equation*}
Hence, in \eqref{eq:connorth} we can dispense with the use of $P'(y)$, if $\lambda(y)$ is available:
\begin{align}\label{eq:Qstarconstrained}
    Q^*_{\ell(y)}\ell'(y)\delta y = \ell'_{\Hilbert}(y)\delta y + \lambda(y) A'(y)\delta y\quad \text{ for all } \delta y \in T_y \Y.
\end{align}
\begin{remark}
 To cover constrained optimization problems of the form
 \begin{equation*}
 \min_{x\in {\Hilbert}} f(x) \text{ s.t. } c(x)=0
 \end{equation*}
 we may set $\mathcal Y=\{c(x)=0\}$, $\ell_{\Hilbert}(y)=f'(x)$, $\ell_{\Hilbert}'(y)=f''(x)$, $A(y)=c'(x)$ and $A'(y)=c''(x)$. Then~\eqref{eq:Qstarconstrained} yields the Hessian of the Lagrangian function
\begin{align}\label{eq:QstarconstrainedOpt}
    Q^*_{f'(x)}f''(x)\delta x = f''(x)\delta x + \lambda(x) c''(x)\delta x \quad \text{ for all } \delta x  \in \mathrm{ker}\, c'(x),
\end{align}
which occurs in the second order analysis and in Lagrange-Newton methods for constrained optimization. It can thus be interpreted as the covariant second derivative of $f$ on $\Y$.
\end{remark}

\section{Applications}
\label{sec:applications}
In this section we will discuss the computational aspects of solving variational problems on manifolds. Our main emphasis is on algorithmic questions, and in particular the realization of the abstract concepts, introduced so far, to a couple of concrete problems. We deliberately do not touch questions of (numerical) analysis, since this is beyond the scope of the current work. 
\subsection{Elastic Geodesics in a Force Field}
\label{sec:ElasticGeodesics}
A geodesic minimizes the length, or equivalently, the Dirichlet energy~\eqref{eq:Dirichlet} among all curves that connect two given points on a manifolds. An alternative interpretation of such a curve would be that of an elastic string, constrained to the manifold, which assumes the configuration of minimal energy. A straightforward generalization of this model is to add a force field to this problem. We end up with an \emph{elastic geodesic} in a force field.

Let $\gamma\colon I \to \M$ where $\M$ is an embedded submanifold of $\R^d$ with $\dim \M = m$ and $I \coloneqq [0,T] \subset \R$ for $T\in \R$.
We use the euclidean inner product $\langle \cdot, \cdot \rangle$ of $\R^d$ as a Riemannian metric on $\M$. Let $\X$ be a Sobolev manifold (cf. e.g. \cite{hajlasz2009sobolev})
\begin{equation*}
    \X = H^1(I, \M) \coloneqq \{ g \in H^1\left(I, \R^d\right) \; \mid \; g(t) \in \M \text{ a.e.} \}
\end{equation*}
and $\E = T\X$ its tangent bundle.
Let $\omega\colon \M \to T^*\M$ be a force field on $\M$. In general $\omega$ may have no anti-derivative (is not exact as a $1$-form), so in terms of physics, $\omega$ may be a non-conservative force field. 
This is in particular the case, if $\omega'$ is not symmetric, or equivalently, the outer derivative $d\omega$ is non-zero. 

Our goal is to find a zero of the following mapping:
\begin{align*}
    F\colon \X &\to \E^*\\
    \gamma &\mapsto F(\gamma), \quad F(\gamma)\phi \coloneqq \int_I \langle \dot{\gamma}(t), \dot{\phi}(t)\rangle + \omega(\gamma(t))\phi(t) \; dt \text{ for } \phi \in T_\gamma\X.
\end{align*}
Additionally, we have to take into account that boundary conditions $\gamma(0) = \gamma_0$ and $\gamma(T) = \gamma_T$ for given $\gamma_0, \gamma_T \in \M$ are satisfied. If $\omega \equiv 0$ this coincides with the variational formulation \eqref{eq:GeodesicsVariational} of the classical geodesic problem.
In case of a non-conservative force-field $\omega$ this problem can not be phrased as an energy minimization problem. We call a solution $\gamma$ of this mapping an \emph{elastic geodesic} in a force field $\omega$. Be aware that a solution of this problem is not guaranteed to exist in general. This depends on the choice of boundary points and force fields. 

In order to apply Newton's method to find a zero of $F$, we need the linear mapping $Q_{F(\gamma)}^*\circ F'(\gamma)$ which can be seen as a covariant derivative of a fibrewise linear mapping.

Here, we can use the embedding of $T\M$ into $\R^d$, define the covector back-transport according to~\eqref{eq:CovectorTransportProjection}, and derive the dual connection map applied to $F'$ by~\eqref{eq:Qstarembedded} using projections $P(\gamma(t))\colon \R^d \to T_{\gamma(t)}\M$, $t\in[0,T]$ in a pointwise fashion. To this end we apply the techniques of Section \ref{sec:CovDerivEmbedded} with $H=\R^d$ for each $t\in I$. Concretely, we write
\begin{equation*}
    F(\gamma)\phi = \int_I \langle \dot{\gamma}(t), (P(\gamma(t))\phi(t))\dot{} \;\rangle + \omega(\gamma(t))P(\gamma(t))\phi(t) \; dt
\end{equation*}
for $\phi\in T_\gamma\X$. Differentiating this with respect to $\gamma$ yields
\begin{equation}
\label{eq:covariantDerivativeGeodesics}
\begin{aligned}
    Q_{F(\gamma)}^*\circ F'(\gamma)\delta \gamma\,\phi &= \int_I \langle \dot{\delta \gamma}(t),\dot{\phi}(t)\rangle + \langle \dot{\gamma}(t), (P'(\gamma(t))\delta \gamma(t)\phi(t))\dot{} \; \rangle \\
    & \qquad + \omega'(\gamma(t))\delta \gamma(t)\phi(t) + \omega(\gamma(t))P'(\gamma(t))\delta \gamma(t)\phi(t) \; dt\\
    &=F(\gamma)(P'(\gamma)\delta \gamma\,\phi) + F_{\R^d}'(\gamma)\delta \gamma\,\phi
\end{aligned}
\end{equation}
where $F_{\R^d}'(\gamma)$ denotes the Euclidean derivative of $F$ at $\gamma$, i.e.
\begin{equation*}
    F_{\R^d}'(\gamma)\delta \gamma\, \phi = \int_I \langle \dot{\delta \gamma}(t),\dot{\phi}(t)\rangle + \omega'(\gamma(t))\delta \gamma(t)\phi(t) \; dt.
\end{equation*}
\begin{remark}
 Taking into account that $\X = H^1(I,\M)$ we observe that our pointwise defined connection is \emph{not} the dual of the Levi-Civita connection on $\X$. Due to the Sobolev structure, the Levi-Civita connection would be more difficult to derive and much more expensive to evaluate. The flexibility to employ an arbitrary vector transport together with a conistent connection admits the relatively simple derivation and an efficient computation of the Newton matrix here.
\end{remark}

\subsection*{Discretization} \label{sec:DiscretizationGeodesics}
To turn our infinite dimensional problem computationally tractable, we first have to discretize the right hand side $F(\gamma)$ and the Newton equation
\begin{equation*}
    Q_{F(\gamma)}^* \circ F'(\gamma)\delta \gamma + F(\gamma) = 0_\gamma^* \quad \Leftrightarrow \quad Q_{F(\gamma)}^* \circ F'(\gamma)\delta \gamma\, \phi  + F(\gamma)\phi = 0 \quad \text{ for all } \phi \in T_\gamma \X
\end{equation*}
to compute a Newton direction $\delta \gamma$. For simplicity we choose a continuous piecewise linear discretization of $\gamma$, which utilizes the embedding $\M \subset \R^d$. A discussion of more sophisticated intrinsic ways of discretization, like, for example,  geometric finite elements can be found in \cite{hardering2020geometric}. 

We consider equidistant discrete time points
\begin{equation*}
    t_0 = 0 < t_1 < \dots < t_{N+1} = T
\end{equation*}
with distance $h$, i.\,e.\ $t_i = ih, \; i = 0,...,N+1$, such that
\begin{equation*}
    I = [0,T] = \bigcup_{i=0}^{N} [t_i, t_{i+1}].
\end{equation*}
The set of admissible curves is then defined by the piecewise linear interpolants in $\R^d$ of the points $y_i \in \M$, i.e.
\begin{equation*}
    \gamma(t_i) =  y_i \in \M, \; i=1,..,N.
\end{equation*}
Thus, $\gamma$ is represented by a point $y = (y_1, \cdots y_N)$ on the \emph{product manifold} $\M^N$. Additionally, the points $y_0$ and $y_{N+1}$ are given by the boundary conditions $\gamma(0) = \gamma_0$ and $\gamma(T) = \gamma_T$. Setting
\begin{equation*}
    w_i \coloneqq \omega(\gamma(t_i)) = \omega(y_i) \in T_{\gamma(t_i)}\M^* \subset (\R^d)^* \text{ for } i = 0,\dots, N,
\end{equation*}
yields $w = (w_0, \dots, w_{N+1}) \in ((\R^d)^*)^{N+2}$. Similarly, each test function $\phi$ is given as a piecewise linear interpolant in the following way
\begin{equation*}
(\phi(t_0), \dots, \phi(t_{N+1})) = (\delta y_0, \dots, \delta y_{N+1})  \in T_{y_0}\M \times \dots \times T_{y_{N+1}}\M
\end{equation*}
where $\delta y_i = \delta \gamma(t_i)$ for $i=1, \hdots, N$ and $\delta y_0 = \delta y_{N+1} = 0$. To construct a basis of the set of test functions choose a basis $\{v_{i,j}\}_{j=1\dots m}$ of each tangent space $T_{y_i}\M$ for $i=1\dots N$. Then for each $i$, $j$ we get a set of basis functions as follows:
\begin{equation}\label{eq:orthbasis}
 \phi_{(i-1)\cdot m + j}(t_k)=
 \begin{cases}
  v_{i,j} & \text{ if }i=k,\\
  0 & \text{ if } i \neq k,
 \end{cases}
 \quad i=1\dots N, j=1\dots m.
\end{equation}
The support of $\phi_{(i-1)\cdot m + j}$ consists of the interval $[t_{i-1},t_{i+1}]$. 
\subsection*{Assembly of the Newton system}
To compute the matrix and the right hand side for the Newton system we have to evaluate $F(\gamma)\phi$ for all basis functions $\phi$ and $Q_{F(\gamma)}^*\circ F'(\gamma)\phi^2\phi^1$ for all pairs of basis functions. More specifically, for $M= N \cdot m$ we have to find $b\in \R^{M}$ and $A\in \R^{M\times M}$ by computing
\begin{equation}\label{eq:NewtonAb}
 b_{k}\coloneqq F(\gamma)\phi_{k}, \qquad A_{k,l}\coloneqq Q_{F(\gamma)}^*\circ F'(\gamma)\phi_{l}\phi_{k} \quad k,l=1\dots M.
\end{equation}
Then, solving the equation $A\xi +b=0$ yields a representation of the Newton direction $\delta \gamma$ with respect to the chosen bases:
\begin{equation*}
 \delta \gamma(t) = \sum_{l=1}^M \xi_l \phi_l(t).
\end{equation*}
Let us first consider the right hand side
\begin{align*}
    F(\gamma)\phi
    &= \sum_{i=0}^N \int_{[t_i, t_{i+1}]} \left\langle \dot{\gamma}(t), \dot{\phi}(t)\right\rangle + \omega(\gamma(t))\phi(t) \; dt.
\end{align*}
Due to our hypothesis of linear interpolation, in every time interval $[t_i, t_{i+1}]$, the time derivatives of $\gamma$ and $\phi$ are constant and we can compute them by finite differences in the embedding, i.\,e.\ we use
\begin{equation*}
    \dot{\gamma}(t) = \frac{y_{i+1} - y_{i}}{h} \quad \text{and} \quad \dot{\phi}(t) = \frac{\delta y_{i+1} - \delta y_{i}}{h} \quad \text{for } t \in [t_i,t_{i+1}].
\end{equation*}
Thus, we get
\begin{equation*}
    F(\gamma)\phi = \sum_{i=0}^N \int_{[t_i, t_{i+1}]} \left\langle \frac{y_{i+1} - y_{i}}{h}, \frac{\delta y_{i+1} - \delta y_{i}}{h} \right\rangle + \omega(\gamma(t))\phi(t) \; dt
\end{equation*}
Using the trapezoidal rule to approximate the integral, we obtain the discretized right hand side by
\begin{equation}
    \label{eq:DiscretizationRHS}
    F(\gamma)\phi \approx h \cdot \sum_{i=0}^N \left\langle \frac{y_{i+1} - y_{i}}{h}, \frac{\delta y_{i+1} - \delta y_{i}}{h} \right\rangle + \frac{w_i\delta y_i + w_{i+1}\delta y_{i+1}}{2}.
\end{equation}
As we have seen above, $Q_{F(x)}^*\circ F'(x)$ applied to $\phi^1, \phi^2 \in T_{\gamma}\X$ can be written as
\begin{align*}
    Q_{F(\gamma)}^*\circ F'(\gamma)\phi^2\phi^1
    &= F(\gamma)(P'(\gamma)\phi^2\phi^1) + F_{\R^d}'(\gamma)\phi^2 \phi^1
\end{align*}
where $F_{\R^d}'(\gamma)$ is given by
\begin{align*}
    F_{\R^d}'(\gamma)\phi^2 \phi^1\\
    &= \sum_{i=0}^N \int_{[t_i, t_{i+1}]} \left\langle \dot{\phi^2}(t),\dot{\phi^1}(t)\right\rangle + \omega'(\gamma(t))\phi^2(t)\phi^1(t) \; dt.
\end{align*}
Again, setting $\delta y^1 = (\delta y^1_1, \dots, \delta y^1_N) \coloneqq (\phi^1(t_1), \dots, \phi^1(t_N))$, $\delta y^2 = (\delta y_1^2, \dots, \delta y^2_N) \coloneqq (\phi^2(t_1), \dots, \phi^2(t_N))$ and $\delta y^1_0 = \delta y^1_{N+1} = \delta y^2_0 = \delta y_{N+1}^2 = 0$ and using the trapezoidal rule yields an approximation
\begin{equation}
\label{eq:DiscretizedEuclideanDerivative}
\begin{aligned}
F_{\R^d}'(\gamma)\phi^2 \phi^1 &\approx h \cdot \sum_{i=0}^N \left\langle \frac{\delta y^2_{i+1} - \delta y^2_{i}}{h}, \frac{\delta y^1_{i+1} - \delta y^1_{i}}{h} \right\rangle \\
& \quad + h \cdot \sum_{i=0}^N \frac{w'(y_{i+1})\delta y^2_{i+1}\delta y^1_{i+1} + w'(y_{i})\delta y^2_{i}\delta y^1_{i}}{2}.
\end{aligned}
\end{equation}
We set
\begin{equation*}
    P_i \coloneqq P'(y_i)\delta y_i^2\delta y_i^1 \;\text{ for } \;i=0,\dots, N+1.
\end{equation*}
Thus, combining~\eqref{eq:DiscretizedEuclideanDerivative} with~\eqref{eq:DiscretizationRHS} applied to $P=(P_0,...,P_{N+1})$ yields an approximation
\begin{equation}\label{eq:DiscretizationMatrix}
\begin{split}
    Q_{F(\gamma)}^*\circ F'(\gamma)\phi^2\phi^1 &\approx h \cdot \sum_{i=0}^N \left\langle \frac{y_{i+1} - y_{i}}{h}, \frac{P_{i+1} - P_{i}}{h} \right\rangle + \frac{w_iP_i + w_{i+1}P_{i+1}}{2} \\
    &\quad + h \cdot \sum_{i=0}^N \left\langle \frac{\delta y^2_{i+1} - \delta y^2_{i}}{h}, \frac{\delta y^1_{i+1} - \delta y^1_{i}}{h} \right\rangle \\
    &\quad + h \cdot \sum_{i=0}^N \frac{w'(y_{i+1})\delta y^2_{i+1}\delta y^1_{i+1} + w'(y_{i})\delta y^2_{i}\delta y^1_{i}}{2}.
\end{split}
\end{equation}
Observe that most of the terms in these sum are zero, if the $\phi^i$ are basis functions, as described above, since their values are zero at most of the grid points. In our case, this implies that $A$ is a block tridiagonal matrix with block of size $m\times m$.
\subsection*{Implementation}

The algorithm~\ref{GlobalerNewton} is implemented in \mintinline{julia}|Julia|~\cite{BezansonEdelanKarpinskiViral:2017}
as an algorithm within the framework of \mintinline{julia}|Manopt.jl|~\cite{Bergmann:2022:1}.
Since this framework is built upon \mintinline{julia}|ManifoldsBase.jl| the algorithm can directly
work on arbitrary \mintinline{julia}|VectorBundle|s, especially on a \mintinline{julia}|TangentBundle|
from this interface for Riemannian manifolds.
Furthermore, this generic vector bundle implementation is available for any manifold from
\mintinline{julia}|Manifolds.jl|~\cite{AxenBaranBergmannRzecki:2023}.
For the discretized models this leads to employing the generic \mintinline{julia}|PowerManifold|
implementation to phrase the problem on $\mathcal M^N$.

The data~\eqref{eq:NewtonAb} for the Newton system is computed in the spirit of a finite element assembly routine by using a loop over the intervals $[t_{i},t_{i+1}]$ with indices $i=0\dots N$ and adding to $A$ and $b$ all contributions to~\eqref{eq:DiscretizationRHS} and~\eqref{eq:DiscretizationMatrix} of (pairs of) basis function $\phi_{(i-1)\cdot m+j}$, defined in~\eqref{eq:orthbasis}, whose supports meet $(t_{i},t_{i+1})$.

\subsection*{Numerical results}
We consider $\M=\mathbb{S}^2\subset \R^3$ equipped with the Riemannian metric $\langle \cdot, \cdot \rangle$ given by the Euclidean inner product restricted to each embedded tangent space with corresponding norm $\lVert \cdot\rVert$ and a closed interval $I \subset \R$. To monitor convergence we use an $\infty$-norm over these norms:
\[
 \|\delta \gamma\|_{\gamma,\infty}=\max_{i=1\dots N}\|\delta \gamma(t_i)\|,
\]
which is obviously not a norm of Riemannian type, but fits into the theoretical framework of \cite{WeiglSchiela:2024}. In contrast to the $2$-norm its size is largely independent of the number of discretization points. 

Let $\omega\colon \M \to T^*\M$ be the 1-form given by a (scaled) winding field, i.\,e.\ for $y\in \mathbb{S}^2$ we set
\begin{equation*}
    \omega(y) \coloneqq \frac{3 y_3}{y_1^2+y_2^2} \bigg\langle \begin{pmatrix}
        -y_2 \\ y_1 \\ 0
    \end{pmatrix}, \cdot \bigg\rangle \in (T_y\mathbb{S}^2)^*.
\end{equation*}

Note that $\omega$ is then a non-conservative force. For almost antipodal boundary points $\gamma_0$ and $\gamma_T$ we choose the connecting geodesic as initial guess for Newton's method.

For $y\in \mathbb{S}^2$ the orthogonal projection onto the tangent space $T_y \mathbb{S}^2 \cong y^\perp$ (which is needed for computing $Q_{F(\gamma)}^*\circ F'(\gamma)$ according to~\eqref{eq:covariantDerivativeGeodesics}) is given by
\begin{equation*}
    P(y)\colon \R^3 \to T_y\mathbb{S}^2, \quad P(y) \coloneqq Id - y\langle y, \cdot \rangle
\end{equation*}
where $Id$ denotes the identity map. The derivative of the orthogonal projection at $y\in\mathbb{S}^2$ in the direction $v \in T_y\mathbb{S}^2$ applied to $u\in T_y\mathbb{S}^2$ is given by
\begin{equation*}
    (P'(y)v)u = -y\langle v, u\rangle - v \langle y, u \rangle.
\end{equation*}
As a retraction on $H^1(I, \M)$ we use a pointwise retraction on the sphere given by the normalization, i.\,e.\ for $\gamma \in H^1(I, \M)$, $\delta \gamma \in T_{\gamma}H^1(I, \M)$ and for each $t\in I$ we set
\begin{equation}
    \label{eq:pointwiseRetraction}
    R_{\gamma(t)}(\delta \gamma(t)) \coloneqq \frac{\gamma(t)+\delta \gamma(t)}{\lVert\gamma(t)+\delta \gamma(t)\rVert} \in \mathbb{S}^2.
\end{equation}
Figure~\ref{fig:geodesicForce} shows the local superlinear convergence that we expected to see due to~\cite[Prop. 5.10]{WeiglSchiela:2024}. In particular, the local convergence does not seem to depend on the number of discretization points. In addition, the resulting elastic geodesic in the force field can be seen, as well as the initial geodesic and the force acting on each point of the curve.

\begin{figure}[tbp]
\begin{minipage}[t]{0.54\textwidth}
    \begin{tikzpicture}
  \begin{axis}[
    xlabel={Iteration},
    ylabel={$\Vert \delta \gamma\Vert_{\gamma, \infty}$},
    ymode=log,
    grid=both,
    width=7cm,
    height=5.5cm,
    scale=0.85,
    xtick={1,2,3,4,5},
    ytick distance=10^4,
    legend pos=south west,
    legend style={font=\small}
  ]
        \addplot table [
      col sep=comma,
      header=false,
      x expr=\coordindex+1,   
      y index=0             
    ]{data/norm_newton_direction_geodesic_force_N100.csv};
    \addlegendentry{$N=100$}
     \addplot table [
      col sep=comma,
      header=false,
      x expr=\coordindex+1,   
      y index=0           
    ]{data/norm_newton_direction_geodesic_force_N1000.csv};
    \addlegendentry{$N=1000$}
     \addplot table [
      col sep=comma,
      header=false,
      x expr=\coordindex+1,   
      y index=0             
    ]{data/norm_newton_direction_geodesic_force_N10000.csv};
    \addlegendentry{$N=10000$}
  \end{axis}
\end{tikzpicture}
\end{minipage}
\begin{minipage}[t]{0.45\textwidth}
    \centering
    \hspace{0.5cm}\includegraphics[scale=0.5]{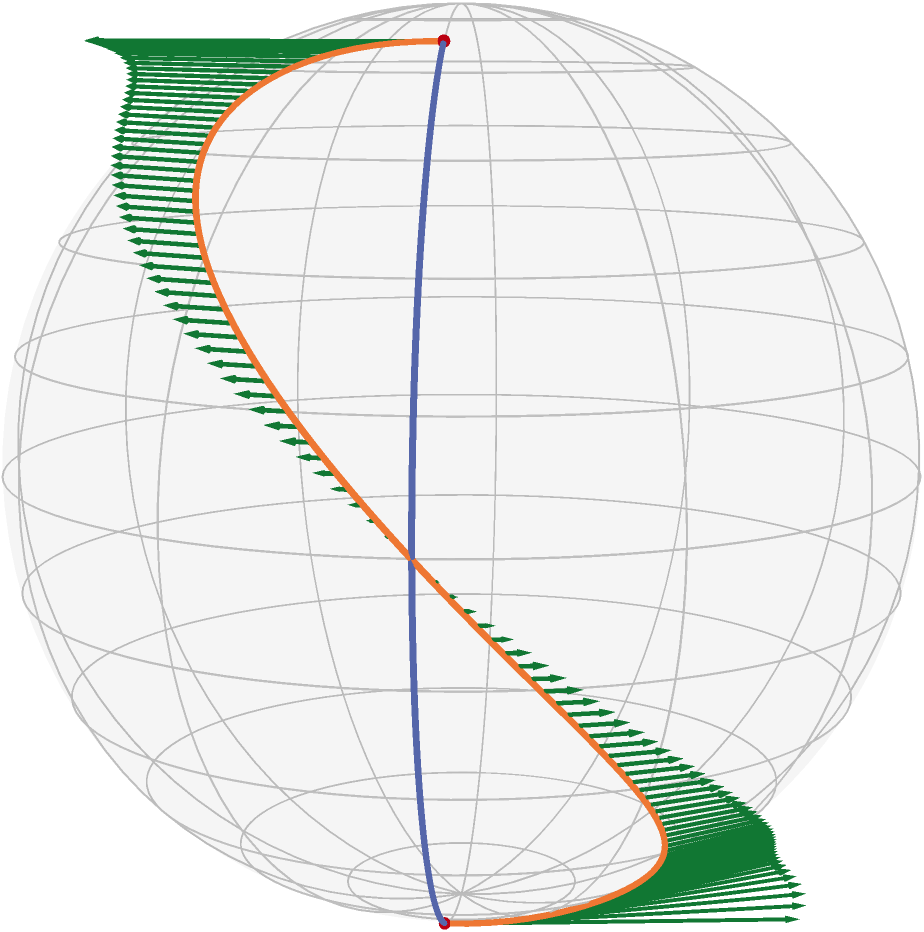}
\end{minipage}
\caption{Superlinear convergence of Newton's method for different numbers of discretization points $N=100, \, 1000, \,10000$ (left). Initial geodesic (blue) and resulting geodesic (orange) in the force field (green) for $N=100$ (right).}
\label{fig:geodesicForce}
\end{figure}

\subsection{An Obstacle Problem for Elastic geodesics}
\label{sec:ObstacleProblemElasticGeodesics}
We consider obstacle problems with elastic geodesics as a simple example for a variational inequality on a manifold. The idea is to find a curve of minimal Dirichlet energy that connects two given points, but avoids a prescribed area on the manifold. We will employ a penalty path-following method, where the sub-problems are not continuously differentiable, but only Newton-differentiable.
\newline

Let $\M$ be an embedded submanifold of $\R^d$ with $\dim \M = m$ and $I \coloneqq [0,T] \subset \R$ for $T\in\R$.  Consider the constrained minimization problem
\begin{equation}
    \label{eq:obstacle_constraint}
\begin{aligned}
    &\min_{\gamma \in H^1(I, \M)} \; \frac{1}{2} \int_I \lVert\dot \gamma(t)\rVert^2 \; dt \\
    & s.t. \; c(\gamma(t)) \leq 0 \; \text{ for all } t \in I
\end{aligned}
\end{equation}
where $c\colon\M \to \R^l$ is a twice differentiable mapping and the $\leq$ is meant pointwise. Additionally, we have the boundary conditions $\gamma(0) = \gamma_0$ and $\gamma(T) = \gamma_T$ for given $\gamma_0, \, \gamma_T \in \M$. Solving this problem yields a geodesic connecting $\gamma_0$ and $\gamma_T$ and avoiding an obstacle that is described by the given constraints. Using a penalty method, also known as Moreau-Yosida regularization (cf.~\cite{HintermuellerKunisch:2006:1}), with a quadratic penalty term we can rewrite this as an unconstrained minimization problem with a penalty coefficient $p\in \R$:
\begin{equation}
    \label{eq:obstacle_penalty}
    \begin{aligned}
        \min_{\gamma \in H^1(I, \M)} \; \frac{1}{2} \int_I \lVert\dot \gamma(t)\rVert^2 + p \sum_{i=1}^l \max(0, c_i(\gamma(t)))^2 \; dt
    \end{aligned}
\end{equation}
where $c_i\colon \M \to \R$ denotes the $i$-th component function of $c$.
We denote the objective of the penalized problem by
\begin{align*}
    f\colon H^1(I, \M) &\to \R \\
    \gamma &\mapsto f(\gamma) \coloneqq \frac{1}{2} \int_I \lVert\dot \gamma(t)\rVert^2 + p \sum_{i=1}^l m(c_i(\gamma(t)))^2 \; dt.
\end{align*}
Let $m \colon\R \to \R, \, m(x) \coloneqq \max(0, x)$. It can easily be shown that this function is Newton-differentiable with a Newton-derivative
\begin{equation}
    \label{eq:NewtonDerivativMax}
    m'(x) \coloneqq \begin{cases}
        0 &\text{ if }\; x<0 \\
        \text{arbitrary} &\text{ if }\; x= 0\\
        1 &\text{ if }\; x>0
    \end{cases}.
\end{equation}
By differentiating $f$ we obtain the mapping $f'\colon H^1(I, \M) \to  T^*H^1(I, \M)$ with
\begin{equation}
    \label{eq:firstderivativeEuclidean}
\begin{aligned}
   f'(\gamma)\phi &\coloneqq \int_I \langle \dot \gamma(t), \dot{\phi}(t)\rangle + p \sum_{i=1}^l m(c_i(\gamma(t)))m'(c_i(\gamma(t)))c_i'(\gamma(t))\phi(t)\; dt.
\end{aligned}
\end{equation}
For the maximum function it holds that
\begin{equation*}
    m(x)m'(x) = \begin{cases}
        0 & \text{ if }\; x \leq 0 \\
        m(x) & \text{ if }\; x > 0
    \end{cases} \; = m(x) \text{ for all } x\in \R.
\end{equation*}
Hence,~\eqref{eq:firstderivativeEuclidean} reduces to
\begin{equation*}
    f'(\gamma)\phi = \int_I \langle \dot \gamma(t), \dot{\phi}(t)\rangle + p \sum_{i=1}^l m(c_i(\gamma(t)))c_i'(\gamma(t))\phi(t)\; dt.
\end{equation*}
$f'$ is a semismooth mapping into a dual vector bundle, namely the cotangent bundle of $H^1(I, \M)$. Thus, we can apply Newton's method to compute a critical point of $f$. Defining the covector back-transport by orthogonal projections according to \eqref{eq:CovectorTransportProjection} and using the projection technique for embedded vector bundles derived in Sec.~\ref{sec:CovDerivEmbedded}, we obtain a dual connection map
\begin{equation*}
    Q_{f'(\gamma)}^*\circ f''(\gamma)\delta \gamma \, \phi = f_{\R^d}''(\gamma)\delta \gamma \, \phi + f'(\gamma)(P'(\gamma)\delta \gamma \, \phi).
\end{equation*}
Here, using a Newton-derivative of the maximum function according to \eqref{eq:NewtonDerivativMax}, we can write a Newton-derivative of $f'$, denoted by $f_{\R^d}''(\gamma)$, as
\begin{align*}
    f''_{\R^d}(\gamma)\delta \gamma \, \phi &= \int_I \langle \dot{\delta \gamma}(t), \dot{\phi}(t)\rangle + p \sum_{i=1}^l m'(c_i(\gamma(t)))c_i'(\gamma(t))\delta \gamma(t) c_i'(\gamma(t))\phi(t) \\
    & \quad + p \sum_{i=1}^l m(c_i(\gamma(t)))c_i''(\gamma(t))\delta \gamma(t)\phi(t) \; dt.
\end{align*}
\subsection*{Numerical results}
Consider $\M = \mathbb{S}^2 \subset \R^3$ and a closed interval $I \subset \R$. We want to compute a geodesic connecting two points $\gamma_0$ and $\gamma_T$ on the sphere while avoiding the north pole cap. For a given height $h_{\mathrm{ref}} \in (0,1)$ we consider the minimization problem
\begin{align*}
&\min_{\gamma\in H^1(I, \M)} \; \int_I \lVert\dot\gamma(t)\rVert^2 \; dt \\
&s.t. \; \gamma_3(t) \leq 1-h_{\mathrm{ref}} \; \text{ for all } t\in I
\end{align*}
where $\gamma_3(t)$ denotes the third component of $\gamma(t)\in \mathbb{S}^2$. Thus, we have one constraint given by $c\colon \mathbb{S}^2 \to \R, \; c(y) = y_3 - 1 + h_{\mathrm{ref}}$.
In this case a Newton-derivative according to~\eqref{eq:firstderivativeEuclidean} of the objective of the penalized problem given by~\eqref{eq:obstacle_penalty} reads
\begin{equation*}
    f'(\gamma)\phi = \int_I \langle \dot\gamma(t), \dot{\phi}(t)\rangle + p \cdot m(\gamma_3(t) - 1 + h_{\mathrm{ref}})\phi_3(t).
\end{equation*}
Since $c'(y) = (0,0,1)$ and $c''(y) = 0 \in \R^{3\times 3}$, the formula for the second Euclidean derivative reduces to
\begin{equation*}
    f''_{\R^3}(\gamma)\delta \gamma \,\phi = \int_I \langle \dot{\delta \gamma}(t), \dot{\phi}(t)\rangle + p \cdot m'(\gamma_3(t) - 1 + h_{\mathrm{ref}})\delta \gamma_{3}(t) \phi_{3}(t).
\end{equation*}
To compute a stationary point of $f$ by Newton's method, we have to discretize the Newton equation
\begin{equation*}
    Q^*_{f'(\gamma)} \circ f''(\gamma)\delta \gamma +f'(\gamma) = 0^*
\end{equation*}
to compute a Newton direction $\delta \gamma$. Here, we use the same approximation techniques as in the example before. The retraction which is necessary to define Newton steps is chosen as a pointwise retraction by normalization (cf.~\eqref{eq:pointwiseRetraction}).
For the computation of a solution of the penalized problem we use a simple path-following method increasing the penalty parameter by a factor 1.2 in each iteration. For the cap with $h_{\mathrm{ref}}=0.1$, respectively $h_{\mathrm{ref}} = 0.2$, this yields the elastic geodesics shown in Fig.~\ref{fig:geodesicObstacle} avoiding the north pole cap and connecting two points $\gamma_0$ and $\gamma_T$.%
\begin{figure}[tbp]
    \begin{minipage}[t]{0.49\textwidth}
        \centering
        \includegraphics[scale=0.5]{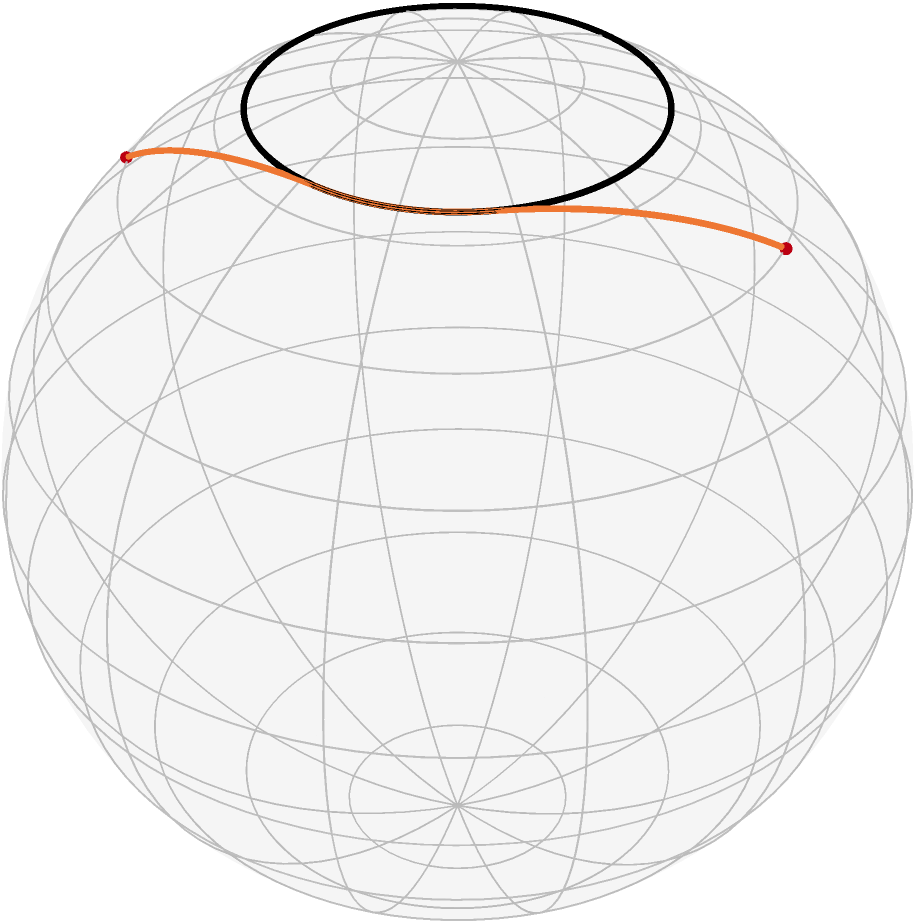}
    \end{minipage}
        \begin{minipage}[t]{0.49\textwidth}
        \centering
        \includegraphics[scale=0.5]{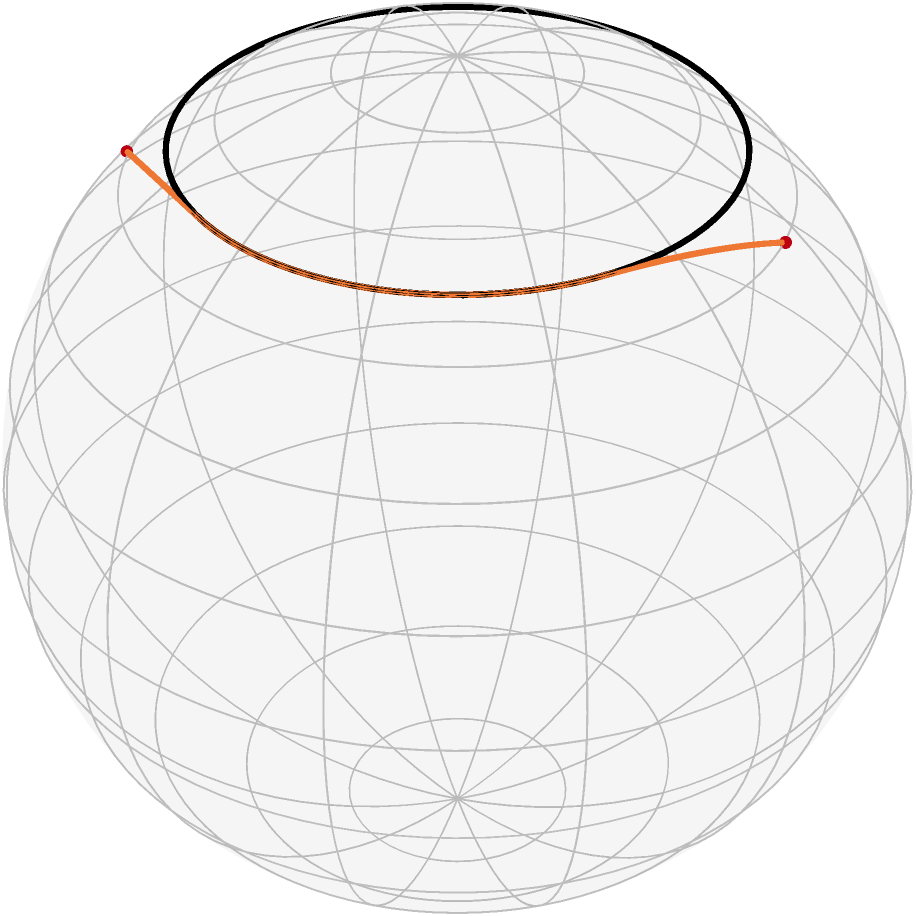}
    \end{minipage}
    \caption{Elastic geodesic avoiding the north pole cap with $h_{\mathrm{ref}} = 0.1$ (left) and $h_{\mathrm{ref}} = 0.2$ (right).}
    \label{fig:geodesicObstacle}
\end{figure}
%
%
\subsection{An Inextensible Elastic Rod}%
\label{sec:inextensibleRod}
As a simple example from continuum mechanics, we consider the computation of equilibrium states of an inextensible elastic rod.
First we provide the energetic formulation of the force free situation. For more details on the derivation of the model see~\cite{glowinski1989augmented}. Its solution via an SQP-method in Hilbert space was considered in~\cite{SchielaOrtiz:2021}.
We start with the following energy minimization problem
\begin{equation}\label{min_prob}
\begin{split}
\min_{y\in \M}&\frac{1}{2}\int_{0}^{1} \sigma(s) \langle \ddot y(s),\ddot y(s)\rangle ds,  \\
\M&=\{y\mid y\in H^2([0,1];\mathbb{R}^3),\dot y(s)\in \mathbb S^2  \, \mbox{on} \, [0,1] \}.
\end{split}
\end{equation}
Since $\dot y(s)\in \mathbb{S}^2$ for all $s\in [0,1]$ the rod is inextensible with fixed length $1$. The following boundary conditions are imposed:
\begin{align}\label{B.C}
\begin{split}
y(0)=y_a \in \mathbb{R}^3, \, \dot y(0)=v_a\in \mathbb{S}^2, \;
y(1)=y_b \in \mathbb{R}^3, \, \dot y(1)=v_b\in \mathbb{S}^2.
\end{split}
\end{align}
The quantity $\overline \sigma > \sigma(s)\ge \underline \sigma>0$ is the flexural stiffness of the rod, and $\dot y$, $\ddot y$ are the derivatives of $y$ with respect to $s\in[0,1]$.
Introducing $v(s)\coloneqq\dot y(s)$ we reformulate~\eqref{min_prob} as a mixed problem:
\begin{align}\label{eq:Mixed}
\begin{split}
\min_{(y,v)\in Y\times \mathcal V}& \frac{1}{2}\int_{0}^{1} \sigma \langle \dot v,\dot v\rangle \, ds
\quad \mbox{ s.t. } \quad \dot y-v =0\\
&Y=\{y\in H^2([0,1];\mathbb{R}^3)\,\, | \,\, y(0)=y_a,\,\, y(1)=y_b  \}, \\
&\mathcal V=\{v\in H^1([0,1];\mathbb{S}^2)\,\, |\,\, v(0)=v_a,\,\, v(1)=v_b  \}.
\end{split}
\end{align}
To derive equilibrium conditions for problem~\eqref{eq:Mixed} we define the Lagrangian function
 \begin{equation*}
  L(y,v,\lambda)
  =\int_{0}^{1} \frac{1}{2}  \sigma \left\langle \dot v,\dot v \right\rangle+\lambda (\dot y-v)\, ds
 \end{equation*}
using a Lagrangian multiplier $\lambda \in \Lambda \coloneqq L_2([0,1];\R^3)$. In the presence of an additional (possibly non-conservative) force-field $\omega \colon \R^3 \to (\R^3)^*$ we obtain the following equilibrium conditions via setting the derivatives of the Lagrangian, with force term added, to zero:
\begin{equation}\label{eq:systemRod}
\begin{split}
 \int_0^1 \omega(y)\phi_y +\lambda (\dot{\phi_y})\, ds&=0 \quad \text{ for all } \phi_y\in Y\\
\int_{0}^{1} \sigma \left\langle  \dot v,\dot{\phi_v}\right\rangle -  \lambda(\phi_v) ds&=0 \quad \text{ for all } \phi_v\in T_v \mathcal V\\
\int_{0}^{1}  \phi_\lambda(\dot y-v) ds&=0 \quad \text{ for all } \phi_\lambda\in \Lambda
\end{split}
\end{equation}
Hence, have to find a zero of the mapping
\begin{equation*}
 F \colon Y \times \mathcal V \times \Lambda \to Y^* \times T^*\mathcal V\times \Lambda^*,
\end{equation*}
defined by~\eqref{eq:systemRod}. For brevity we set $\X=Y \times \mathcal V \times \Lambda$ and $x=(y,v,\lambda)$ and obtain a mapping $F \colon \X \to T^*\X$.

To define Newton steps, we need retractions $R_x \colon T_x\X \to \X$ and vector transports $\VTforward{x} \in \Gamma(\L(\X \times T^*_x\X, T^*\X))$, which can be defined componentwise. Since $Y$ and $\Lambda$ are linear spaces, we can use the identity mappings for these components, while $\mathcal V$ is equipped with the retraction by normalization (cf.~\eqref{eq:pointwiseRetraction}) and the vector transport by orthogonal projections, in a similar way as in the geodesic problem.
Since we again have an embedded vector bundle, we can use the formula
\begin{equation*}
    Q_{F(x)}^*\circ F'(x)\delta x\,\phi = F_{\R^9}'(x)\delta x \,\phi + F(x)(\VTforward{x}'(x)\delta x\,\phi)
\end{equation*}
for $x \in \X$ and $\phi, \delta x \in T_x\X$. We denote the components of the tangent vectors by $\phi = (\phi_y, \phi_v, \phi_\lambda)$ and $\delta x = (\delta y, \delta v, \delta \lambda) \in Y \times T_v\mathcal{V} \times \Lambda$. The Euclidean derivative $F_{\R^9}'(x)$ of $F$ is given as
\begin{align*}
F_{\R^9}'(x)\delta x \, \phi&=
  \int_0^1 \omega'(y)\delta y \, \phi_y + \delta \lambda(\dot{\phi_y})\, ds\\
  &+\int_0^1 \sigma \langle \dot{\delta v}, \dot{\phi_v}\rangle  -\delta \lambda(\phi_v)\, ds\\
  &+
   \int_0^1\phi_\lambda(\dot{\delta y})  - \phi_\lambda(\delta v) \, ds.
\end{align*}
The part, introduced by the connection is given by
\begin{equation*}
    F(x)(\VTforward{x}'(x)\delta x \,\phi)=\int_0^1\langle \dot v,\left(P'(v)\delta v \, \phi_v\right)\dot{}\;\rangle-\lambda(P'(v)\delta v \, \phi_v)\, ds.
\end{equation*}
Observe that this term only contains contributions from the $\mathcal V$-component, since the other components are linear spaces.
\subsection*{Numerical results}
For our numerical example we set the flexural stiffness of the rod to $\sigma \equiv 1$ and assume that there is no additional force field, i.\,e.\ $\omega \equiv 0$. The boundary conditions are set to
\begin{align*}
\begin{split}
&y(0)=\begin{pmatrix}0 \\ 0 \\ 0 \end{pmatrix} \in \mathbb{R}^3, \, y(1)= \begin{pmatrix}0.8 \\ 0 \\ 0 \end{pmatrix}\in \mathbb{R}^3, \\
&v(0)=\frac{1}{\sqrt{5}}\begin{pmatrix}1 \\ 0 \\ 2 \end{pmatrix}\in \mathbb{S}^2, \, v(1)=\frac{1}{\sqrt{1.64}}\begin{pmatrix}1 \\ 0 \\ 0.8 \end{pmatrix}\in \mathbb{S}^2.
\end{split}
\end{align*}
On a given uniform grid on $[0,1]$ we discretize our function spaces $Y$ and $\mathcal V$ by piecewise linear continuous functions, and $\Lambda$ by discontinuous piecewise constant functions. The arising integrals are then computed numerically by the trapezoidal rule in a similar way as used in the geodesic problem. We applied Newton's method with affine-covariant damping to solve the equilibrium conditions~\eqref{eq:systemRod}. In Figure~\ref{fig:rod} the initial and the resulting rod can be seen. Figure~\ref{fig:ConvergenceAndStepsizesRod} shows the local superlinear convergence of Newton's method and the computed damping factors $\alpha$.
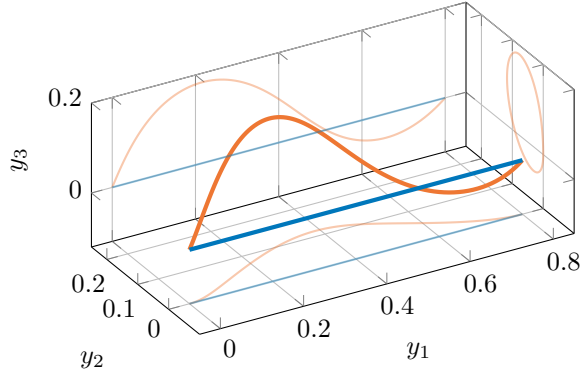
\begin{figure}[tbp]
    \begin{tikzpicture}
    \begin{axis}[
        view={-30}{35},
        width=8cm, height=6cm,
        grid=major,
        xlabel={$y_1$}, ylabel={$y_2$}, zlabel={$y_3$},
        ytick={0.0,0.1,...,0.3},
        zmin=-0.12, xmin=-0.05, ymin=-0.1,
        zmax=0.2, xmax=0.85, ymax=0.25,
        plot box ratio=2 1 1,
    ]

        \addplot3[ultra thick,color=TolVibrantOrange] table[x=x1, y=y1, z=z1,col sep=comma] {data/inextensible-rod-result.csv};

        \addplot3[thick,color=TolVibrantOrange,opacity=0.4] table[x, y expr=0.25, z=z1,col sep=comma] {data/inextensible-rod-result.csv};
        \addplot3[thick,color=TolVibrantOrange,opacity=0.4] table[x=x1, y=y1, z expr=-0.12,col sep=comma] {data/inextensible-rod-result.csv};

        \addplot3[thick,color=TolVibrantOrange,opacity=0.4] table[x expr=0.85, y=y1, z ,col sep=comma] {data/inextensible-rod-result.csv};

        \addplot3[ultra thick, color=TolVibrantBlue] table[ x=x1, y=y1, z=z1, col sep=comma] {data/inextensible-rod-data.csv};
        \addplot3[thick, color=TolVibrantBlue, opacity=0.4] table[x, y expr=0.25, z=z1, col sep=comma] {data/inextensible-rod-data.csv};

        \addplot3[thick, color=TolVibrantBlue, opacity=0.4] table[x=x1, y=y1, z expr=-0.12, col sep=comma] {data/inextensible-rod-data.csv};

    \end{axis}
    \end{tikzpicture}
    \caption{Inextensible rod with initial guess (blue) and computed solution (orange). The lines in light colors depict projections onto the coordinate planes (shadows).}
    \label{fig:rod}
\end{figure}

\begin{figure}[tbp]
    \begin{minipage}[t]{0.49\textwidth}
        \begin{tikzpicture}
        \begin{axis}[
            xlabel={Iteration},
            ylabel={$\Vert \delta x\Vert$},
            ymode=log,
            grid=both,
            width=9cm,
            height=8cm,
            scale=0.55,
            xtick={1,2,3,4,5,6,7,8,9},
            ytick distance=10^4
        ]
                \addplot table [
            col sep=comma,
            header=false,
            x expr=\coordindex+1,   
            y index=0             
            ]{data/norm_newton_direction_rod.csv};
        \end{axis}
        \end{tikzpicture}
    \end{minipage}
    \begin{minipage}[t]{0.49\textwidth}
                \begin{tikzpicture}
        \begin{axis}[
            xlabel={Iteration},
            ylabel={$\alpha$},
            grid=both,
            width=9cm,
            height=7.5cm,
            scale=0.6,
            xtick={1,2,3,4,5,6,7,8,9},
            ytick distance=0.1
        ]
                \addplot table [
            col sep=comma,
            header=false,
            x expr=\coordindex+1,   
            y index=0             
            ]{data/stepsize_rod.csv};
        \end{axis}
        \end{tikzpicture}
        \label{fig:stepsizesRod}
    \end{minipage}
    \caption{Superlinear convergence of Newton's method for $N=100$ discretization points (left) and damping factors $\alpha$ chosen by affine covariant damping strategy (right).}
    \label{fig:ConvergenceAndStepsizesRod}
\end{figure}
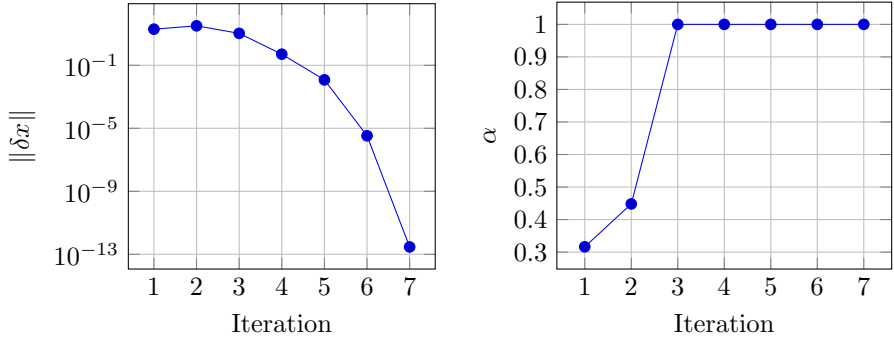
%
%
\section{Conclusion}
\label{sec:Conclusion}
We have elaborated, how Newton's method can be used to solve variational problems on manifolds. Our key observation was that these equations can be written as root finding problems for mappings into dual vector bundles. This allowed us to derive the differential geometric tools, necessary to implement a Newton method for this setting, together with an affine covariant damping scheme. We have further demonstrated that our algorithm is capable of solving a variety of problems in this class. Clearly, these problems are just simple examples for a large number of complex applications from various domains, to which our methods could be applied. This is, however, subject to future research.

\appendix

\printbibliography

@misc{smith1993geometric,
  title={Geometric optimization methods for adaptive filtering},
  author={Smith, Steven Thomas},
  year={1993},
  publisher={Harvard University}
}

@article{EfficientDampedNewton,
  title={An efficient damped {N}ewton-type algorithm with globalization strategy on {R}iemannian manifolds},
  author={Bortoloti, MAA and Fernandes, Teles A and Ferreira, Orizon P},
  journal={Journal of Computational and Applied Mathematics},
  volume={403},
  pages={113853},
  year={2022},
  publisher={Elsevier}
}

@article{smith1994optimization,
  title={Optimization Techniques on Riemannian Manifolds},
  author={Smith, Steven T},
  journal={Fields Institute Communications},
  volume={3},
  year={1994}
}

@article{absil2007trust,
  title={Trust-region methods on Riemannian manifolds},
  author={Absil, P-A and Baker, Christopher G and Gallivan, Kyle A},
  journal={Foundations of Computational Mathematics},
  volume={7},
  number={3},
  pages={303--330},
  year={2007},
  publisher={Springer}
}

@incollection{hajlasz2009sobolev,
  title={Sobolev mappings between manifolds and metric spaces},
  author={Hajlasz, Piotr},
  booktitle={Sobolev Spaces In Mathematics I: Sobolev Type Inequalities},
  pages={185--222},
  year={2009},
  publisher={Springer}
}

@article{gabay1982minimizing,
  title={Minimizing a differentiable function over a differential manifold},
  author={Gabay, Daniel},
  journal={Journal of Optimization Theory and Applications},
  volume={37},
  number={2},
  pages={177--219},
  year={1982},
  publisher={Springer}
}

@article{Altmann:2025,
    author = {Altmann, Robert and Hermann, Martin and Peterseim, Daniel and Stykel, Tatjana},
    title = {Riemannian optimization methods for ground states of multicomponent Bose–Einstein condensates},
    journal = {IMA Journal of Numerical Analysis},
    pages = {draf046},
    year = {2025},
    month = {06},
    issn = {0272-4979},
    doi = {10.1093/imanum/draf046},
    url = {https://doi.org/10.1093/imanum/draf046},
    eprint = {https://academic.oup.com/imajna/advance-article-pdf/doi/10.1093/imanum/draf046/63615489/draf046.pdf},
}

@article{sander2016numerical,
  title={Numerical treatment of a geometrically nonlinear planar Cosserat shell model},
  author={Sander, Oliver and Neff, Patrizio and Birsan, Mircea},
  journal={Computational Mechanics},
  volume={57},
  number={5},
  pages={817--841},
  year={2016},
  publisher={Springer}
}

@article{diepeveen2021inexact,
  title={An inexact semismooth Newton method on Riemannian manifolds with application to duality-based total variation denoising},
  author={Diepeveen, Willem and Lellmann, Jan},
  journal={SIAM Journal on Imaging Sciences},
  volume={14},
  number={4},
  pages={1565--1600},
  year={2021},
  publisher={SIAM}
}

@article{si2024riemannian,
  title={A Riemannian proximal Newton method},
  author={Si, Wutao and Absil, P-A and Huang, Wen and Jiang, Rujun and Vary, Simon},
  journal={SIAM Journal on Optimization},
  volume={34},
  number={1},
  pages={654--681},
  year={2024},
  publisher={SIAM}
}

@book{bartels2015numerical,
  title={Numerical methods for nonlinear partial differential equations},
  author={Bartels, S{\"o}ren},
  volume={47},
  year={2015},
  publisher={Springer}
}

@incollection{hardering2020geometric,
  title={Geometric finite elements},
  author={Hardering, Hanne and Sander, Oliver},
  booktitle={Handbook of variational methods for nonlinear geometric data},
  pages={3--49},
  year={2020},
  publisher={Springer}
}

@incollection{schulz2015towards,
  title={Towards a Lagrange--Newton approach for PDE constrained shape optimization},
  author={Schulz, Volker H and Siebenborn, Martin and Welker, Kathrin},
  booktitle={New Trends in Shape Optimization},
  pages={229--249},
  year={2015},
  publisher={Springer}
}

@article{louzeiro2025inexact,
  title={Inexact Newton Methods for Solving Generalized Equations on Riemannian Manifolds},
  author={Louzeiro, Mauricio S and Silva, Gilson N and Yuan, Jinyun and Zhang, Daoping},
  journal={Journal of Scientific Computing},
  volume={103},
  number={2},
  pages={67},
  year={2025},
  publisher={Springer}
}

@article{AxenBaranBergmannRzecki:2023,
    ARTICLENO = {33},
    AUTHOR    = {Axen, Seth D. and Baran, Mateusz and Bergmann, Ronny and Rzecki, Krzysztof},
    DOI       = {10.1145/3618296},
    JOURNAL   = {ACM Transactions on Mathematical Software},
    MONTH     = {12},
    NUMBER    = {4},
    TITLE     = {Manifolds.Jl: An Extensible Julia Framework for Data Analysis on Manifolds},
    VOLUME    = {49},
    YEAR      = {2023}
}

@article{BezansonEdelanKarpinskiViral:2017,
    AUTHOR    = {Bezanson, Jeff and Edelman, Alan and Karpinski, Stefan and Shah, Viral B},
    JOURNAL   = {SIAM review},
    NUMBER    = {1},
    PAGES     = {65--98},
    PUBLISHER = {SIAM},
    TITLE     = {Julia: A fresh approach to numerical computing},
    DOI       = {10.1137/141000671},
    VOLUME    = {59},
    YEAR      = {2017}
}

@misc{WeiglSchiela:2024,
           month = {06},
            year = {2024},
         address = {Bayreuth},
           title = {Newton's method for nonlinear mappings into vector bundles},
        keywords = {Newton's method; Banach manifolds; vector bundles},
             url = {https://arxiv.org/abs/2404.04073},
          author = {Weigl, Laura and Schiela, Anton}
}

@book {MarsdenRaitu:1994,
    AUTHOR = {Marsden, Jerrold E. and Ratiu, Tudor S.},
     TITLE = {Introduction to mechanics and symmetry},
    SERIES = {Texts in Applied Mathematics},
    VOLUME = {17},
 PUBLISHER = {Springer-Verlag, New York},
      YEAR = {1994},
     PAGES = {xvi+500},
      ISBN = {0-387-97275-7; 0-387-94347-1},
       DOI = {10.1007/978-1-4612-2682-6},
}

@article{helein2008harmonic,
  title={Harmonic maps},
  author={H{\'e}lein, Fr{\'e}d{\'e}ric and Wood, John C},
  journal={Handbook of global analysis},
  volume={1213},
  pages={417--491},
  year={2008},
  publisher={Elsevier Amsterdam}
}

@book{rubin2000,
  title={Cosserat Theories: shells, rods and plates},
  author = {Rubin, M. B.},
  year = {2000},
  publisher ={Kluwer Acedemic Publishers}
}

@book{Deuflhard,
  author={Deuflhard, Peter},
  title={{N}ewton methods for nonlinear problems: affine invariance and adaptive algorithms},
  volume={35},
  year={2005},
  publisher={Springer Science \& Business Media}
}

@book{glowinski1989augmented,
  title={Augmented Lagrangian and operator-splitting methods in nonlinear mechanics},
  author={Glowinski, Ronald and Le Tallec, Patrick},
  volume={9},
  year={1989},
  publisher={SIAM}
}

@article{SchielaOrtiz:2021,
author = {Schiela, Anton and Ortiz, Julian},
title = {An SQP Method for Equality Constrained Optimization on Hilbert Manifolds},
journal = {SIAM Journal on Optimization},
volume = {31},
number = {3},
pages = {2255-2284},
year = {2021},
doi = {10.1137/20M1341325},
}

\end{document}